\documentclass[preprint,12pt]{elsarticle}
\usepackage{amsfonts}
\usepackage{amssymb}
\usepackage{amsmath,amsthm}
\usepackage{amssymb}

\newtheorem{theorem}{Theorem}[section]

\newtheorem{proposition}[theorem]{Proposition}

\newtheorem{assumption}[theorem]{Assumption}

\newtheorem{example}[theorem]{Example}

\newtheorem{remark}[theorem]{Remark}

\begin{document}

\begin{frontmatter}
\title{Stochastic Burgers PDEs with random coefficients and a generalization of the Cole-Hopf transformation
}

\author[ne]{Nikolaos Englezos}
 \ead{englezos@unipi.gr}
\author[nf]{Nikolaos E. Frangos}
\ead{nef@aueb.gr}
\author[nf]{Xanthi-Isidora Kartala \corref{cor1}}
\ead{xkartala@aueb.gr}
\author[nf]{Athanasios N. Yannacopoulos}
\ead{ayannaco@aueb.gr}

\address[ne]{
Department of Banking and Financial Management,
 University of Piraeus,
 80 Karaoli \& Dimitriou Str.,
18534, Piraeus, Greece.}

\address[nf]{
Department of Statistics,
Athens University of Economics and Business,
76 Patission Str.,
10434, Athens, Greece.}

\address{\footnotesize{\emph{This is the authors' version of a work that was accepted for publication in \emph{Stochastic Processes and Their Applications}. Changes resulting from the publishing process, such as peer review, editing, corrections, structural formatting, and other quality control mechanisms may not be reflected in this document. Changes may have been made to this work since it was submitted for publication.}}}

\begin{abstract}
This paper studies forward and backward versions of random Burgers equation (RBE) with stochastic coefficients. Firstly, the celebrated Cole-Hopf transformation reduces the forward RBE to a forward random heat equation (RHE) that can be treated pathwise.
Next we provide a connection between the backward Burgers equation and a system of FBSDEs. Exploiting this connection, we derive a generalization of the Cole-Hopf transformation which links the backward RBE with the backward RHE and investigate the range of its applicability. Stochastic Feynman-Kac representations for the solutions are provided. Explicit solutions are constructed and applications in stochastic control and mathematical finance  are discussed.
\end{abstract}

\begin{keyword}
Stochastic Burgers equation \sep random coefficients \sep generalized Cole-Hopf transformation \sep stochastic heat equation
\sep stochastic Feynman - Kac formulae \sep controllability \sep contingent claim pricing
\MSC{60H15, 60H30, 35R60, 91G80. }
\end{keyword}

\end{frontmatter}

\section{Introduction}\label{sec:intro}
Burgers equation plays a very important r\^ole in the theory of differential equations and applied mathematics. It is a quasilinear parabolic partial differential equation (PDE) of the form
\begin{equation}
U_{t} + U \, U_{x} = \frac{1}{2}\,\nu \, U_{xx}\, ,
\label{eqn:introBURGERS}
\end{equation}
where $U(t,x)$ is an unknown velocity field, which is to be determined by the initial condition $U(0,x)=u_{0}(x),$
and $\nu$ is a viscosity term;
hereafter explicit dependence of the subsequent fields is suppressed for ease of notation and subscripts will denote partial derivatives with respect to the corresponding variable.
This equation can be considered either in the whole real line $(x\in \mathbb{R})$
 or in a bounded interval with Dirichlet or Neumann boundary conditions.
It has been proposed in the 1930's by the Dutch scientist J.M. Burgers as a simple model for the dynamics of the Navier-Stokes equations in one spatial dimension \cite{Burgers1,Burgers2}. As such it has been used to model the dynamics of one dimensional pressureless  turbulence in fluid flows and the hope was that it would help to  understand a lot of the intricate structure of this fascinating subject.
One of the great breakthroughs in this study was the discovery by J. D. Cole \cite{Cole} and E. Hopf \cite{Hopf} in the 1950's of a transformation that reduces this equation to the heat equation, thus allowing the derivation of exact solutions in closed form. In particular by the transformation $U=-\frac{\partial}{\partial x}\ln V$ to a new variable $V(t,x)$, where we take without loss of generality that $\nu=1$, the Burgers equation transforms to the linear heat equation
 \begin{eqnarray}
 V_{t}=\frac{1}{2}\,V_{xx}\, .
 \label{eqn:introHEAT}
 \end{eqnarray}
 This transformation, which has been named the Cole-Hopf transformation\footnote{ It is worth noting that a version of Burgers equation was proposed in 1906 by Forsyth, who in fact proposed a version of the linearising transformation, and a similar equation was further proposed by Bateman in 1915. For the history of the Burgers equation and the Cole-Hopf transformation see \cite{Holden}.}, allows us to construct explicit analytic solutions for the Burgers equation.

This breakthrough ended temporarily the career of Burgers equation as a modeling tool for turbulence, however, it turned it into a scientific
paradigm (a) of a nonlinear PDE that is linearizable with the use of a simple tranformation (in some sense the PDE analogue of the Ricatti equation) and (b) of a benchmark model that can be used to understand the basic features of the interaction between nonlinearity and dissipation. The scientific community soon found many new and interesting applications for the Burgers equation, other than its initial ones in fluid mechanics. For instance, as a model for condensed matter physics, in statistical physics (a continuous time version of ballistic deposition models within the framework of the Kardar-Parisi-Zhang (KPZ) model), in cosmology within what is referred to as the Zel'dovich approximation \cite{Zeldovich}, in the modelling of traffic flow \cite{Chowdhury}, and even in economic theory \cite{Hodges1, Hodges2, Yannacopoulos}.

However, the Burgers equation developed a parallel, theoretical, and abstract mathematical existence beyond its dominant presence in applications. Motivated by the intention to reinstate  Burgers equation as a model for turbulence, the nonlinear dynamics community turned its attention to the \emph{randomly} forced Burgers equation.  This introduced a class of quasilinear \emph{stochastic} PDEs (SPDEs) of the form
\begin{eqnarray}
U_{t} + U \, U_{x} = \frac{1}{2}\, U_{xx} + F(t,x,\omega),
\nonumber
\end{eqnarray}
where now $F(\cdot,x,\cdot)$ is a stochastic process that acts as the forcing term. This turns the solution into a \emph{random field}, whose properties are to be determined by the data of the problem, i.e., the forcing term and the initial condition. Depending on the properties of the random forcing term, this equation has to be treated accordingly, i.e., pathwise or in the It\^o sense
\begin{eqnarray}
dU = \left(\frac{1}{2}\,U_{xx} - U \,U_{x}\right) dt + Q \, dW(t),
\nonumber
\end{eqnarray}
in the case where the forcing term $F(t,x,\omega)$ can be modelled as an infinite dimensional Brownian motion.

The introduction of \emph{randomness} in Burgers equation produced a number of very interesting new directions; directions connected with dynamical systems aspects of the equation, e.g. existence and properties of invariant measures (see for instance the important contributions by E et al \cite{E} or Goldys and Maslowski \cite{Goldys}), directions related to various questions on the well-posedeness of the equation in various functional settings using techniques from infinite dimensional stochastic analysis (see for instance the important contributions of  Da Prato, Deboussche,  Nualart and others \cite{DaPrato1,DaPrato2,Gyongi,Gourcy,Kim,Manca}), interesting connections with geometry (see for instance the contributions by Cruzeiro and Malliavin \cite{Cruzeiro} or Davies, Truman, and Zhao \cite{Davies}), connections with the theory of superprocesses in \cite{Bonnet}  etc. This theoretical work was inspired by issues related to turbulence (see e.g. \cite{Albeverio, She, Sinai}) but also led to many new exciting applications (e.g. the work of Kiefer  \cite{Kiefer} that connects the random Burgers equation with polymer models, see also the review paper of Bec and Khanin \cite{Bec} on Burgers turbulence  and references therein  for details on other possible applications).

Another interesting problem related to the Burgers equation, both for the deterministic as well as for the stochastic version,  is that of  its optimal control  or controllability. This is an important problem both from the point of view of theoretical considerations as from the point of view of applications. Temam and coworkers \cite{Choi} studied this problem from the viewpoint of application in turbulence control and gave some positive answers to the question of whether a feedback control law can be a feasible way to drive the fluid flow to a desired final state. Such questions were further elaborated upon by Da Prato and Debussche \cite{DaPrato3} with the use of dynamic programming techniques. However, there has been a renewed interest on the  issue of controllability for  Burgers equation; see for instance \cite{Chapouly, Horsin, Glass} for recent related work.
These problems bring up the following two interesting directions.

The first one is related to the \emph{well-posedeness} of the forward stochastic Burgers equation, complemented with the feedback control law predicted by the dynamic programming equation. Following \cite{DaPrato3} such a closed loop equation may take the form
\begin{eqnarray}
dU = \left(\frac{1}{2}\,U_{xx} - U \, U_{x}\right) \, dt + \Phi(t, U) dt + Q\, dW(t),
\label{eqn:DAPRATO}
\end{eqnarray}
where $\Phi(t,U)$ is a functional specifying the feedback control law. The exact form of $\Phi$ depends on the cost criterion which is to be optimized, and at this point is irrelevant to our discussion. What is important, though, is that it introduces the need to study more complicated forms of the forward stochastic Burgers equation, that includes a potential like term $\Phi(t,U)$. This term may well be a random field itself.

The second one is related to the issue of \emph{controllability}. When one considers the problem of whether it is feasible to find a control procedure $W(t,U)$ which drives the system to a desired final state $\xi$ at a given time $T$, then it is appropriate to look at the original system as a final value problem with $U(T,x)=\xi$, rather than as an initial value problem. For instance, if one wishes to find $W$ so as to drive the system

\begin{eqnarray}
U_{t}-U \, U_{x} = -\frac{1}{2}\, U_{xx} + W(t, U)
\nonumber
\end{eqnarray}
to the desired state $U(T,x)=\xi$ then one should study the well-posedeness of the resulting backward equation
in the sense of the existence of a random field $U$ and a mapping $W(t,U)$ that satisfy the above problem. There are close connections between the problem of controllability and the well-posedeness of this backward problem;
see for instance the discussion of the famous Hilbert Uniqueness Method (HUM) proposed by J. L. Lions \cite{Lions} and J. M. Coron \cite{Coron} for the control of linear and nonlinear spatially extended systems, respectively.
This powerful idea requires a major revision so that it may be applicable to SPDEs, on account of problems related to the adaptivity of the solution to the filtration generated by the noise process when time is reversed.
Such questions led to the development of the theory of backward SPDEs (BSPDEs) (see for instance the work by Ma and Yong in \cite{Ma-Yong98} and \cite{Ma-Yong99} that have recently found applications in \cite{Peng}, \cite{Buckdahn} or \cite{Englezos}
to ``pathwise" utility optimization problems emerging from mathematical finance.  Using this powerful theory we may consider the problem of controllability of the stochastic Burgers equation,
by addressing the problem of well-posedeness of a backward stochastic Burgers equation that has the form
\begin{eqnarray}
&&dU = \left( -\frac{1}{2}\,U_{xx} - U \, U_{x} + \Phi(U,Z)\right)\, dt + Z \,dW(t),
\nonumber \\
&&U(T,x)=\xi, \nonumber
\end{eqnarray}
where $\xi$ is a given random final condition, and now we need to look for the pair of random fields $(U,Z)$. As expected the unknown random field $Z$ is related to the control procedure needed to drive the system to the desired final state.

In all the above, the stochasticity was supposed to be an additive term, imposed in the system through the external driving force $F(t,x,\omega)$. However, the randomness may well be
inserted in the model in terms of \emph{random} coefficients. For instance, one may consider  a model which is of the form of the Burgers equation, but with the viscosity term $\nu$ being
a random field rather than a constant parameter. Such a situation may be a good model for fluid turbulence, in which the viscosity term is determined through the distribution of turbulent eddies
in the flow. Another case where a Burgers equation with random coefficients could arise, is when we consider a random potential. This may either arise as a physical model
(e.g. as a model for a conducting fluid in the presence of external random electromagnetic fields) or in terms of a controlled problem, in the spirit of (\ref{eqn:DAPRATO}). Other possible motivations
could arise in the context of economic theory, and in particular within the context of portfolio selection in market models with random coefficients.

It is therefore the aim of this paper to study \emph{forward} as well as \emph{backward} stochastic Burgers equations with \emph{random} coefficients. Such problems have not been studied extensively in the literature.  One of our main concerns is to study whether a \emph{generalization} of the Cole-Hopf transformation can be obtained, that allows us to transform the \emph{quasilinear} Burgers equation with random coefficients to a \emph{linear} heat equation.
If this holds, then we may find solutions via this approach for the Burgers equation possibly in closed form, a fact that will allow us to obtain interesting information on their properties. Such information will provide important input for the relevant models. On the other hand, even if the solution of the heat equation in analytic form  is no more possible, due to the presence of the random coefficients, the reduction to a linear equation will allow us to deduce interesting qualitative information on the solutions of the Burger equation, such as comparison principles, Feynman-Kac representations etc.

In the forward case, we employ the Cole-Hopf transformation to a forward version of stochastic Burgers equation with random coefficients, linearizing it to a stochastic heat equation that does not contain a stochastic integral term; that is, it can be solved \emph{pathwise} as a deterministic one. A special version of forward stochastic Burgers equation with constant coefficients was studied in \cite{Villarroel}.
The backward case is far more delicate and our method of approach is to associate the stochastic Burgers equation with a system of ordinary forward-backward stochastic differential equations (FBSDEs) with random coefficients, through the four step scheme introduced in \cite{Ma Protter Yong}; an exposition of the theory on FBSDEs may also be found in \cite{Ma-Yong98}. To do so, firstly we establish this relation both for the Burgers equation and for its corresponding heat equation in the case of deterministic coefficients, developing a \emph{probabilistic} approach to the Cole-Hopf transformation that allows us to reinterpret it as a \emph{point} transformation between the two associated FBSDE systems. Using then this approach, we \emph{generalize} this transformation to backward stochastic Burgers equations with random coefficients. This generalization is by no means trivial and reflects interesting features concerning the nature of the equation. Through this extended transformation, we find the most general version of backward stochastic Burgers equation that is linearizable and reducible to a stochastic heat equation, subject to additional constraint equations that now appear due to the randomness of the coefficients. Examples of completely solvable backward stochastic Burgers equations are also presented for particular families of random coefficients.

In both cases, forward or backward, the resulting stochastic heat equation is linear and much easier to handle. Thus from the linear system and the generalized Cole-Hopf transformation we construct solutions
to stochastic Burgers equations with random coefficients and obtain \emph{stochastic} Feynman-Kac type representations for them. Our results indicate that the backward stochastic Burgers equations, whose study has been neglected so far in the literature, have more intricate structure than their forward counterparts  and can lead to interesting applications.
In particular, the controllability of a backward stochastic Burgers system that drives it to a predetermined final state amounts to its solvability subject to a suitably selected initial state control. On the other hand,
in a tax regulated financial market with a money market and a stock, a small investor may price and hedge a contingent claim, whose future value depends on the volatility of the stock, by selecting his portfolio according
to the solution of an FBSDE system and, in turn, to the solution of the associated backward stochastic Burgers equation.

A summary of the paper is as follows. In Section \ref{sec:RBEMs} we introduce a forward and a backward general version of stochastic Burgers equation with random coefficients, and present the necessary mathematical preliminaries. Section \ref{sec:PACH} connects the backward deterministic Burgers equation  with a class of FBSDEs, via the four step scheme, developing a probabilistic approach to the celebrated Cole-Hopf transformation. In Sections \ref{sec:FRBE} and \ref{sec:BRBE} we cope with the linearization of the already introduced forward and backward stochastic Burgers equations with random coefficients, respectively. Stochastic Feynman-Kac formulae are established for their solutions in Section \ref{sec:SFKF}. Finally, Section \ref{sec:AFB} illustrates the obtained results with applications to controllability and mathematical finance.
\vskip1cm
\section{Random Burgers Equations: Introduction of Two Models and Mathematical Preliminaries}\label{sec:RBEMs}
The main goal of this section is two-fold. The first goal is  to introduce two versions of  stochastic Burgers equations
with \emph{random} coefficients, a forward and a backward one, and motivate them as generalizations of the deterministic Burgers equation in the presence of noise and randomness in the coefficients. The second goal is to set the necessary notation and functional framework, and also present briefly the necessary mathematical tools that will be used in the paper.
\vskip0.5cm
\subsection{Two versions of the random Burgers equation}
In contrast with the deterministic case where the forward and the backward problem in time can be  related with the use of the simple time inversion transformation\footnote{even though the parabolic nature of the equation may introduce certain difficulties related to the unique continuation of the solution for negative times} $t \rightarrow -t$,  in the stochastic case with or without random coefficients this is no longer true, on account of technical but vital for the nature of the problem difficulties related with the adaptedness of the solution to the filtration generated by the noise process. Therefore, in the stochastic case the forward and the backward problems have to be formulated and treated differently. In both problems, we consider a 1-dimensional Brownian motion $W(\cdot)$ on
some filtered probability space $\big(\Omega, \mathcal{F},P; \mathbb{F}\big)$
with $\mathbb{F}\triangleq\mathcal{F}(\cdot)$ being the natural filtration generated by $W(\cdot),$ augmented by all the $P$-null sets
in $\mathcal{F}.$ Generalizations to higher dimensional or even infinite dimensional noise are feasible but are not pursued in the present work.
In addition, $\sigma$, $a$, $g$, $b,$ $e,$ $s,$ $m,$ and $f$ are assumed to be square integrable, real-valued random fields defined on
$[0,T]\times\mathbb{R}\times\Omega$ for a terminal time $T>0,$
such that for fixed $x\in \mathbb{R}$ they are $\mathbb{F}$-progressively measurable.

We consider first the \emph{forward} version of the stochastic Burgers equation with initial condition given by a square integrable random field $p:\mathbb{R}\times\Omega\rightarrow\mathbb{R}$; that is,
\begin{align}\label{eqn:forwardSBURGERSred0}
dU&=\bigg[\frac{1}{2} \ \sigma^2(t,x)U_{xx}+a(t,x)UU_{x}+g(t,x)U\Psi(t,U)+b(t,x)U_{x}\nonumber\\
&\quad \ \ +e(t,x)U+s(t,x)\Psi_{x}(t,U)+m(t,x)\Psi(t,U)+f(t,x)\bigg]dt\\
&\quad+\Psi(t,U)dW(t), \qquad 0< t\leq T,\quad x\in \mathbb{R},\nonumber\\
U(0,x)&=p(x),\qquad x\in \mathbb{R}.\nonumber\\ \nonumber
 \end{align}

Here $\Psi(t,U)$ is a \emph{known} function (or even random field) of $U$, pre-described by the form of the model. It reflects the effect of the fluctuations on the evolution of $U$, i.e., how the effect of $W(\cdot)$ transfers to an effect on $U$. In the simple case where $
\Psi(t,U)$ is a constant, we obtain a stochastic Burgers equation with additive noise.

The motivation for equation (\ref{eqn:forwardSBURGERSred0}) is rather straightforward. One may consider any physical situation where the evolution of the state of the system in question  is modeled in terms of Burgers equation. Then assume that this system is subject to external sources, whose behavior is random and subject to fluctuations (this accounts for the Wiener process term in the above system), while at the same time the coefficients of the model are also subject to uncertainties. This uncertainty is introduced by the random coefficients, in a sense that the coefficients of the model are functions, rather than constants, of a random process that evolves in parallel with the state of the system. This random process depends on the external noise term, therefore we assume that the random coefficients are functions which are adapted to the filtration generated by the Wiener process.

To illustrate the above points let us consider the example of turbulence modeling: the Wiener process models a body force for the fluid which presents fluctuations around an average body force.
 The random coefficients would correspond to a randomly changing viscosity term in the fluid. This is a very reasonable modeling assumption since for example the effective viscosity of the fluid depends on
 the eddie's formed in the turbulent flow which in turn depend on the random fluid velocity, which in turns depends on the external random body force. The above discussion can be easily transferred to other models.
 In conclusion we ought to comment on the formulation of a forward equation. We assume that the state of the system is known at $t=0$, and our aim through this model is to predict the possible future states
 of the system and provide information on their statistical properties.
This problem will be a useful model for a number of physical situations where the Burgers equation arises.

We now introduce the \emph{backward} problem. Here we assume that we know the final state of the system, which is the square integrable $\mathcal{F}_{T}$-measurable random field $p:\mathbb{R}\times\Omega\rightarrow\mathbb{R}$,
 and we wish to find which initial condition must be chosen in order to drive the system at time $T$ to this state.
The proper formulation of this problem is in terms of the BSPDE:

\begin{align}\label{eqn:SBURGERS0}
dU&=\bigg[-\frac{1}{2} \ \sigma^2(t,x)U_{xx}+a(t,x)UU_{x}+g(t,x)U\Psi^{U}+b(t,x)U_{x}\nonumber\\
& \qquad +e(t,x)U+s(t,x)\Psi_{x}^{U}+m(t,x)\Psi^{U}+f(t,x)\bigg]dt\\
&\quad+\Psi^{U}dW(t), \qquad  0\leq t < T,\quad x\in \mathbb{R}, \nonumber\\
U(T,x)&=p(x),\qquad x\in \mathbb{R}. \nonumber
\end{align}

It should be stressed here that now the unknowns are the pair of random fields $(U,\Psi^{U})$ and not just $U$. Therefore in contrast to what holds for the forward problem (\ref{eqn:forwardSBURGERSred0}), $\Psi(t,U)$ can no longer be thought of as given data to the problem, i.e. as pre-described by the model, but now it has to be specified as part of the solution of the problem. What is even more interesting is that $\Psi^{U}$ is \emph{uniquely} determined by $U$, as a properly defined functional of $U$  and not necessarily as a function of $U$.
In some sense $\Psi^{U}$ has to be interpreted as the (unique) auxiliary process needed to drive the system to the desired random final state. Therefore, it may have the interpretation of a \emph{control procedure} which acts on the system so as to drive it to a desired state. Furthermore, the compensation of this auxiliary process, allows us to obtain a solution to the backward problem which is adapted to the filtration generated by the Wiener process.

The important motivation for this problem arises from its connection with control problems. By the discussion above, one observes immediately that the backward problem is intimately related to the problem of controllability of the stochastic Burgers equation. This problem can be stated as follows: given an initial state can we drive the system by the proper control procedure  to a desired final state which is random but with prescribed statistical properties? As we have stated the problem, it allows us to characterize this initial state that will drive us to the desired final state, and at the same time through $
\Psi^{U}$ characterize the control procedure needed in the accomplishment of this task. The connection is much deeper; in particular, through the generalization of the Pontryagin maximum principle in the context of SPDEs one may show that the dual  (adjoint) system associated with a large variety of optimal control problems will have a form closely related to (\ref{eqn:SBURGERS0}).
\vskip0.5cm
\subsection{Notation, functional setting, and some important preliminary results}\label{subsec:PRE}

Throughout this paper, denote by
$C^{0,k}\big([0,T]\times\mathbb{R}^{n}\big)$
for any integer $k\geq 0$ the set of real-valued functions
on $[0,T]\times\mathbb{R}^{n}$ that are continuous with respect to the time variable
and continuously
differentiable up to order $k$ with respect to the spatial variable;
define accordingly the set $C^{k}(\mathbb{R}^{n})$. In addition, for any $1\leq
p\leq\infty$, any Banach space $\mathbb{X}$ with norm
$\|\cdot\|_{\mathbb{X}}$, and any sub-$\sigma$-algebra
$\mathcal{G}\subseteq\mathcal{F}$, denote by
\begin{itemize}
\item $\mathbb{L}^{p}_{\mathcal{G}}(\Omega,\mathbb{X})$ the
set of all $\mathbb{X}$-valued, $\mathcal{G}$-measurable random
variables $X$ such that
$E\|X\|_{\mathbb{X}}^{p}<\infty\; ;$

\item $\mathbb{L}^{p}_{\mathbb{F}}(0,T;\mathbb{X})$ the set
of all $\mathbb{F}$-progressively measurable, $\mathbb{X}$-valued
processes $X:[0,T]\times\Omega\rightarrow\mathbb{X}$ such
that $\int_{0}^{T}\|X(t)\|_{\mathbb{X}}^{p}dt<\infty, \
\text{a.s.} ;$

\item $\mathbb{L}^{p}_{\mathbb{F}}\big(0,T;\mathbb{L}^{p}(\Omega;\mathbb{X})\big)$ the set
of all $\mathbb{F}$-progressively measurable, $\mathbb{X}$-valued
processes $X:[0,T]\times\Omega\rightarrow\mathbb{X}$ such
that $\int_{0}^{T}E\|X(t)\|_{\mathbb{X}}^{p}dt<\infty\; ;$

\item $C_{\mathbb{F}}([0,T];\mathbb{X})$
the set of all continuous, $\mathbb{F}$-adapted processes
$X(\cdot,\omega):[0,T]\rightarrow\mathbb{X}$ for $P$-a.e.
$\omega\in\Omega$.
\end{itemize}
Moreover, define similarly the set
$C_{\mathbb{F}}\big([0,T];\mathbb{L}^{p}(\Omega;\mathbb{X})\big)$, and let
$\mathbb{R}^{+}$ stand for the positive real numbers.

By default in the sequel, all (in)equalities between random quantities are to be understood $dP$-almost surely, $dP\otimes dt$-almost everywhere or
$dP\otimes dt\otimes dx$-almost everywhere, as suitable in the situation at hand.
Furthermore, for every random field $\mathrm{F}:[0,T]\times\mathbb{R}^{n}\times\Omega\rightarrow\mathbb{R}$ denote by $A^{\mathrm{F}}, \Psi^{\mathrm{F}}:[0,T]\times\mathbb{R}^{n}\times\Omega\rightarrow\mathbb{R}$ the random fields for which $\mathrm{F}$ obtains the semimartingale decomposition
\[ \mathrm{F}(t,x)=\mathrm{F}(0,x)+\int_{0}^{t}A^{\mathrm{F}}(s,x)ds+\int_{0}^{t}\Psi^{\mathrm{F}}(s,x)dW(s)
 \ \ \ \forall\ \, (t,x) \in [0,T]\times\mathbb{R}^{n}. \]
Assume a similar notation for stochastic processes as well.

In order to carry
out computations regarding the change-of-variable formula for random fields, we recall the following useful
implication of the generalized It\^{o}-Kunita-Wentzell (IKW) formula (e.g.
\cite{Kunita}, Section 3.3, pp 92-93).

\begin{proposition}\label{prop:IKW}
Suppose that the random field
$\, \mathrm{F}:[0,T]\times\mathbb{R}^{n}\times\Omega\rightarrow\mathbb{R}$
is of class $C^{0,2}([0,T]\times\mathbb{R}^{n})$ and $A^{\mathrm{F}}, \Psi^{\mathrm{F}}:[0,T]\times\mathbb{R}^{n}\times\Omega\rightarrow\mathbb{R}$ are  $\mathbb{F}$-adapted random fields of class  $C^{0,1}([0,T]\times\mathbb{R}^{n})$ and $C^{0,2}([0,T]\times\mathbb{R}^{n})$, respectively. Furthermore,
let $\, X=(X^{(1)}, \ldots, X^{(n)})^{\ast}\, $ be a
vector of continuous semimartingales, 
where $\ A^{X^{(i)}}(\cdot)$ is an
almost surely integrable process and
$\Psi^{X^{(i)}}(\cdot)$ is
an $\mathbb{F}$-progressively measurable, almost surely square
integrable process. Then
$\mathrm{F}(\cdot,X(\cdot))$ is also a continuous
semimartingale, with decomposition
\begin{align*}
\mathrm{F}\big{(}t,X(t)\big{)}= &\mathrm{F}\big{(}0,X(0)\big{)}
 + \int_{0}^{t}A^{\mathrm{F}}\big{(}s,X(s)\big{)}ds + \int_{0}^{t}\Psi^{\mathrm{F}}\big{(}s,X(s)\big{)}dW(s)
 \nonumber\\
 &+ \sum_{i=1}^{n}\int_{0}^{t}\frac{\partial}{\partial x_{i}}\mathrm{F}\big{(}s,X(s)\big{)}A^{X^{(i)}}(s)ds \nonumber\\
 \end{align*}
 \begin{align}
\phantom{\mathrm{F}\big{(}t,X(t)\big{)}}
 &+\sum_{i=1}^{n}\int_{0}^{t}\frac{\partial}{\partial x_{i}}\mathrm{F}\big{(}s,X(s)\big{)}\Psi^{X^{(i)}}(s)dW(s)
\nonumber\\
\label{2b} & +
\sum_{i=1}^{n}\int_{0}^{t}\frac{\partial}{\partial x_{i}}\Psi^{\mathrm{F}}\big{(}s,X(s)\big{)}\Psi^{X^{(i)}}(s)ds\\
& + \frac{1}{2}
\sum_{i=1}^{n}\sum_{j=1}^{n}\int_{0}^{t}\frac{\partial^{2}}{\partial x_{i}\partial x_{j}}\mathrm{F}\big{(}s,X(s)\big{)}
\Psi^{X^{(i)}}(s)\Psi^{X^{(j)}}(s)ds \nonumber
\end{align}
 for every $0\leq t\leq T$.
\end{proposition}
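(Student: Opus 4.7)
The plan is to deduce this proposition as a direct application of the generalized It\^o-Kunita-Wentzell formula stated in Kunita, Section 3.3, using the fact that each component $X^{(i)}$ is itself a continuous $\mathbb{F}$-semimartingale driven by the same one-dimensional Brownian motion $W(\cdot)$, and that $\mathrm{F}$ admits a spatially smooth semimartingale decomposition with respect to $W(\cdot)$.

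First I would verify that the hypotheses of Kunita's IKW formula are met: the regularity $\mathrm{F}\in C^{0,2}$ together with $A^{\mathrm{F}}\in C^{0,1}$ and $\Psi^{\mathrm{F}}\in C^{0,2}$ ensure that the spatial derivatives appearing in the expansion exist and are locally integrable in time, while the $\mathbb{F}$-progressive measurability of $A^{\mathrm{F}}, \Psi^{\mathrm{F}}$ together with the integrability assumptions on $A^{X^{(i)}}$ and the square-integrability of $\Psi^{X^{(i)}}$ guarantee that each stochastic integral appearing in (\ref{2b}) is well-defined. In particular, the extra smoothness $C^{0,2}$ imposed on $\Psi^{\mathrm{F}}$ (rather than merely $C^{0,1}$) is exactly what allows the cross-variation term $\int_{0}^{t}\frac{\partial}{\partial x_{i}}\Psi^{\mathrm{F}}(s,X(s))\Psi^{X^{(i)}}(s)ds$ to be treated as a Lebesgue integral.

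Second, I would carry out the computation by first recording the scalar IKW identity for a single semimartingale input and then extending it to the vector case $X=(X^{(1)},\ldots,X^{(n)})^{\ast}$ by the standard multi-parameter argument: treat $\mathrm{F}(t,x_1,\ldots,x_n)$ as simultaneously a semimartingale in $t$ and a smooth function of the spatial coordinates, and apply the scalar formula coordinate by coordinate, collecting the resulting It\^o correction terms. The quadratic variation $d\langle X^{(i)},X^{(j)}\rangle_{t}=\Psi^{X^{(i)}}(t)\Psi^{X^{(j)}}(t)dt$ produces the final double sum in (\ref{2b}), while the cross-variation between the $dW(s)$-martingale parts of $\mathrm{F}$ and of $X^{(i)}$ produces the mixed term involving $\partial_{x_i}\Psi^{\mathrm{F}}\cdot\Psi^{X^{(i)}}$.

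The main obstacle, and the point requiring the most care, is exactly this bookkeeping of the cross-variation contributions: the fact that both the stochastic integrand $\Psi^{\mathrm{F}}(s,X(s))$ and the driving semimartingale components $X^{(i)}(s)$ depend on the \emph{same} Brownian motion $W(\cdot)$ means that the mixed quadratic covariation does not vanish, and one has to differentiate $\Psi^{\mathrm{F}}$ in its spatial argument and evaluate at $X(s)$ in order to capture it correctly. Once these covariations are accounted for and the regularity assumptions are invoked to justify passing derivatives inside the integrals, the identity (\ref{2b}) follows; the rest is a matter of grouping the drift contributions (the genuine $ds$-terms) separately from the martingale contributions (the $dW(s)$-terms), producing exactly the decomposition asserted in the proposition.
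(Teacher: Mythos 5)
Your proposal is correct and follows essentially the same route as the paper, which states this proposition without proof as a direct implication of the generalized It\^o--Kunita--Wentzell formula from Kunita (Section 3.3); your identification of the mixed covariation term $\partial_{x_i}\Psi^{\mathrm{F}}\cdot\Psi^{X^{(i)}}$ as the key bookkeeping point is exactly the content that distinguishes this formula from the ordinary It\^o expansion. The only minor imprecision is attributing the $C^{0,2}$ regularity of $\Psi^{\mathrm{F}}$ to the cross-variation term (which only needs one spatial derivative); that extra order is required by Kunita's general hypotheses rather than by that particular integral.
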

\vskip 1cm
\section{Connection of Backward Burgers Equation with FBSDEs and a Probabilistic Approach to the Cole-Hopf Transformation}\label{sec:PACH}

For a given function $p:\mathbb{R}\rightarrow \mathbb{R}$ and $\sigma\neq 0$, consider the backward \emph{deterministic} Burger's equation of the form
\begin{equation}\label{eqn:BURGERS}
 \begin{split}
 U_{t}+ \frac{1}{2} \ \sigma^{2} \; U_{xx}-\sigma^2 U \, U_{x}&=0, \quad 0<t\leq T,\quad x\in\mathbb{R},\\
 U(T,x)&=p(x), \quad \ x\in\mathbb{R}.
\end{split}
\end{equation}
It is well known that using a transformation to a new variable $V:[0,T]\times\mathbb{R}\rightarrow\mathbb{R}^{+}$
such that $U=-\frac{\partial  }{\partial x} \ln V$, equation
(\ref{eqn:BURGERS}) is linearized assuming the form of the heat
equation
\begin{equation}\label{eqn:HEAT}
\begin{split}
 V_{t} + \frac{1}{2} \ \sigma^2 V_{xx}&=0, \quad 0<t\leq T,\quad x\in\mathbb{R},\\
  V(T,x)&= e^{-\int p(x)dx}\triangleq q(x), \quad \ x\in\mathbb{R}.
\end{split}
\end{equation}
This is the celebrated Cole-Hopf transformation, through which one may construct solutions of Burgers equation using appropriate solutions of the heat equation.

Let us revisit this linearization from a
probabilistic viewpoint. For any $x\in\mathbb{R}$, consider
the system of FBSDEs
\begin{align}\label{eqn:FBSDEB}
dX(t)&=\sigma \, dW(t), \quad  0<t\leq T, \nonumber\\
dY(t)&=\sigma Y(t)\,Z(t) \, dt + Z(t) \, dW(t), \quad 0\leq t<T,\\
X(0)&=x,\qquad Y(T)=p\big(X(T)\big),\nonumber
\end{align}
where $X(\cdot)$ is the forward process, $Y(\cdot)$ is the backward process,
and $Z(\cdot)$ is the auxiliary process needed for the well-posedness of
the problem, each defined on $[0,T]\times \Omega$. If one looks for a Markovian solution to this problem
of the form $Y(t)=U(t,X(t))$ then a simple application of It\^o's
formula
yields both that the deterministic function $U$ should satisfy equation
(\ref{eqn:BURGERS}) and the relationship $Z(t)=\sigma U_{x}(t,X(t))$.
Consider further the FBSDE
\begin{align}\label{eqn:FBSDEH}
dx(t)&=\sigma \, dW(t), \quad  0<t\leq T,\nonumber\\
dy(t)&= z(t) \, dW(t), \quad 0\leq t<T,\\
x(0)&=x,\qquad y(T)=q\big(x(T)\big)\nonumber
\end{align}
for the $\mathbb{F}$-adapted processes $x,y,z:[0,T]\times\Omega\rightarrow \mathbb{R},$ and follow the same steps as above to verify that for a Markovian solution
of the form
$y(t)=V(t,x(t))$  the deterministic function $V$ should satisfy equation
(\ref{eqn:HEAT}) and $z(t)=\sigma V_{x}(t,x(t))$.

The following issue is addressed. Taking into account that the PDEs
(\ref{eqn:BURGERS}) and (\ref{eqn:HEAT}) are related via the Cole-Hopf transformation,
we should expect that the probabilistic systems (\ref{eqn:FBSDEB}) and (\ref{eqn:FBSDEH}),
giving rise to these equations respectively, must be
related as well. Conversely, if
we find a point transformation between the variables $(X,Y,Z)$   and
$(x,y,z)$ then we could use it to reproduce the
Cole-Hopf transformation that allows us to get from the Burgers
equation to the heat equation.
In particular, we consider the point transformation between the solutions of (\ref{eqn:FBSDEB}) and (\ref{eqn:FBSDEH}):
\begin{align}
X&=x,
\nonumber \\
Y&=\mathcal{Y}(x,y,z),
\label{eqn:TRANSF} \\
Z&=\mathcal{Z}(x,y,z), \nonumber
\end{align}
where $\mathcal{Y}$,$\mathcal{Z}:\mathbb{R}^3\rightarrow\mathbb{R}$ are deterministic functions to be determined.
Then we have the following result.

\begin{proposition}\label{pro:PDEY}
Suppose that the process triplet $(x,y,z)$ solves (\ref{eqn:FBSDEH}), and the function $\mathcal{Y}$ belongs to $\mathcal{C}^{2,2,2}(\mathbb{R}^3)$ and
satisfies the PDE
\begin{align}\label{eqn:PDE}
 \frac{1}{2}\;\sigma^2 \, \mathcal{Y}_{xx}+\frac{1}{2}\;z^2 \,
\mathcal{Y}_{yy} + \frac{1}{2}\; h^2 \, \mathcal{Y}_{zz}
&+ \sigma \, z \, \mathcal{Y}_{xy} + h\, z\, \mathcal{Y}_{yz}+  \sigma \,h\, \mathcal{Y}_{xz}\\
&-\sigma^2 \, \mathcal{Y} \, \mathcal{Y}_{x} -  \sigma \,z\; \mathcal{Y} \, \mathcal{Y}_{y} - \sigma \,h\; \mathcal{Y}
\,\mathcal{Y}_{z}=0 \nonumber
\end{align}
for every $h\in\mathbb{R}.$ Furthermore let
\begin{align}\label{eqn:PDEZ}
\mathcal{Z}\big(x(t),y(t),z(t)\big)&=\sigma\;\mathcal{Y}_{x}\big(x(t),y(t),z(t)\big)+z(t)\;\mathcal{Y}_{y}
\big(x(t),y(t),z(t)\big)\\
& \quad +h(t)\mathcal{Y}_{z}\big(x(t),y(t),z(t)\big),\nonumber
\end{align}
in terms of the process $h:[0,T]\times \Omega\rightarrow \mathbb{R}$ such that $z(\cdot)$ has the local martingale representation $dz(t)= h(t) \, dW(t);$   cf. Remark \ref{rem:Z}.
Then the process triplet $(X,Y,Z)$ defined through the point transformation of (\ref{eqn:TRANSF}) solves (\ref{eqn:FBSDEB}).
\end{proposition}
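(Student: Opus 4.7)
My plan is to apply Itô's formula — specifically Proposition \ref{prop:IKW} with $\mathcal{Y}$ playing the role of the random field $\mathrm{F}$, but purely deterministic so that $A^{\mathrm{F}}\equiv\Psi^{\mathrm{F}}\equiv 0$ — to the composite process $Y(t)\triangleq\mathcal{Y}(x(t),y(t),z(t))$, and then match the resulting semimartingale decomposition coefficient-by-coefficient against the target dynamics $dY=\sigma YZ\,dt+Z\,dW$ of (\ref{eqn:FBSDEB}). The forward equation $dX=\sigma\,dW$ with $X(0)=x$ is automatic under the transformation since $X\equiv x$.

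For the backward process I would plug in $dx=\sigma\,dW$, $dy=z\,dW$, $dz=h\,dW$ together with the induced quadratic (co)variations $d\langle x\rangle=\sigma^{2}\,dt$, $d\langle y\rangle=z^{2}\,dt$, $d\langle z\rangle=h^{2}\,dt$, $d\langle x,y\rangle=\sigma z\,dt$, $d\langle x,z\rangle=\sigma h\,dt$, $d\langle y,z\rangle=zh\,dt$. Itô's formula then produces a diffusion coefficient equal to $\sigma\mathcal{Y}_{x}+z\mathcal{Y}_{y}+h\mathcal{Y}_{z}$ — whose identification with $Z=\mathcal{Z}$ is precisely formula (\ref{eqn:PDEZ}) — and a drift assembled from the six second-order terms $\tfrac{1}{2}\sigma^{2}\mathcal{Y}_{xx}$, $\tfrac{1}{2}z^{2}\mathcal{Y}_{yy}$, $\tfrac{1}{2}h^{2}\mathcal{Y}_{zz}$, $\sigma z\mathcal{Y}_{xy}$, $\sigma h\mathcal{Y}_{xz}$, $zh\mathcal{Y}_{yz}$. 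Setting this drift equal to $\sigma YZ=\sigma\mathcal{Y}(\sigma\mathcal{Y}_{x}+z\mathcal{Y}_{y}+h\mathcal{Y}_{z})$ and transposing everything to one side yields exactly the PDE (\ref{eqn:PDE}); closure of the loop then delivers the required backward dynamics.

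The main subtlety I anticipate concerns the role of $h$. Once a solution of the heat equation (\ref{eqn:HEAT}), and hence of the FBSDE (\ref{eqn:FBSDEH}), is fixed, the process $h$ in the martingale representation $dz=h\,dW$ is determined; but the proposition asks for a transformation that works \emph{universally}, without reference to the particular $h$ that arises. This forces (\ref{eqn:PDE}) to hold for every $h\in\mathbb{R}$, which — reading it as a polynomial of degree two in $h$ — decomposes into three simultaneous PDE constraints obtained from the coefficients of $h^{2}$, $h^{1}$ and $h^{0}$. Verifying that a nontrivial $\mathcal{Y}$ satisfies all three (e.g.\ the choice $\mathcal{Y}=-z/(\sigma y)$, which is the infinitesimal counterpart of the Cole-Hopf transform $U=-\partial_{x}\ln V$ and is compatible with the terminal relation $q=e^{-\int p\,dx}$) belongs to the ensuing analysis; the proposition itself merely records that, once these constraints hold, Itô's formula closes the loop and the terminal condition $Y(T)=p(X(T))$ follows from the compatibility between $p$, $q$ and $\mathcal{Y}$ at time $T$.
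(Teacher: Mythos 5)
Your proposal is correct and follows essentially the same route as the paper: apply It\^o's formula to $Y(t)=\mathcal{Y}(x(t),y(t),z(t))$ using the driftless dynamics $dx=\sigma\,dW$, $dy=z\,dW$, $dz=h\,dW$, identify the martingale coefficient $\sigma\mathcal{Y}_{x}+z\mathcal{Y}_{y}+h\mathcal{Y}_{z}$ with $\mathcal{Z}$ via (\ref{eqn:PDEZ}), and use the PDE (\ref{eqn:PDE}) to recognize the drift as $\sigma\mathcal{Y}\mathcal{Z}=\sigma YZ$. Your closing remarks on reading (\ref{eqn:PDE}) as a polynomial identity in $h$ and on the terminal-condition compatibility are sensible additions but not part of the paper's (equally brief) argument.
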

\begin{proof}
Take the It\^o differential of $Y(t)=\mathcal{Y}\big(x(t),y(t),z(t)\big)$ to get
\begin{equation}\label{eqn:DIFY}
dY(t)= I\big(x(t),y(t),z(t);h(t)\big) \, dt + K\big(x(t),y(t),z(t);h(t)\big) \, dW(t),
\end{equation}
where
\begin{align}
I(x,y,z;h)&\triangleq \frac{1}{2}\; \sigma^2 \mathcal{Y}_{xx}(x,y,z) + \frac{1}{2}\;z^2 \,
\mathcal{Y}_{yy}(x,y,z) + \frac{1}{2}\;h^2 \mathcal{Y}_{zz}(x,y,z)\nonumber\\
& \ \ \  + \sigma z \mathcal{Y}_{xy}(x,y,z) + h z \mathcal{Y}_{zy}(x,y,z)+\sigma h \mathcal{Y}_{xz}(x,y,z),\, \nonumber
\\
K(x,y,z;h)&\triangleq \sigma \, \mathcal{Y}_{x} (x,y,z) + z \, \mathcal{Y}_{y}(x,y,z) + h \, \mathcal{Y}_{z}(x,y,z).
\nonumber
\end{align}
Thanks to (\ref{eqn:PDE}) and (\ref{eqn:PDEZ}), these functions satisfy the relationships
\begin{align}
I\big(x(t),y(t),z(t);h(t)\big)&=\sigma \mathcal{Y}\big(x(t),y(t),z(t)\big)\, \mathcal{Z}\big(x(t),y(t),z(t)\big),
\nonumber \\
K\big(x(t),y(t),z(t);h(t)\big)&=\mathcal{Z}\big(x(t),y(t),z(t)\big), \nonumber
\end{align}
and substituting them to (\ref{eqn:DIFY}) we obtain the stated result.
\end{proof}

\begin{remark}\label{rem:Z}
Note that applying It\^{o}'s differential to $z(t)=\sigma V_{x}\big(t,x(t)\big)$, in conjunction with (\ref{eqn:HEAT}),
allows us to take $dz(t)=h(t) \, dW(t)$ in the previous proposition.
\end{remark}

\begin{remark}\label{rem:DRCH}
One can easily verify that a solution of equation (\ref{eqn:PDE}) for any $h\in \mathbb{R}$ is the rational
function $\mathcal{Y}(x,y,z)=-z/\sigma y$. This interpreted in terms of
the solutions $U$ and $V$ of (\ref{eqn:BURGERS}) and (\ref{eqn:HEAT}), respectively, corresponds to the Cole-Hopf transformation, since
\[
U\big(t,x(t)\big)=Y(t)=\mathcal{Y}\big(x(t),y(t),z(t)\big)=-\frac{z(t)}{\sigma y(t)}=-\frac{V_{x}\big(t,x(t)\big)}{V\big(t,x(t)\big)}\,.
 \]
Hence, making use of probabilistic tools we managed to derive an alternative representation of this transformation in terms of a solution
of the PDE (\ref{eqn:PDE}).
\end{remark}
\vskip 1cm

 \section{Linearization of the Forward Random Burgers Equation}\label{sec:FRBE}

 In this section we consider the \emph{forward} version of the stochastic Burgers equation (\ref{eqn:forwardSBURGERSred0}) using the notation $\Psi(t,U)=\Psi^{U}$ and stress the fact that here $\Psi^{U}$ is a pre-described function of $U$.
 It is the object of the subsequent analysis to find the general form of the random coefficients of the above equation as well as of the function $\Psi^{U}$ such that
 this forward SPDE (FSPDE) can be transformed to a linear heat equation through the use of the Cole-Hopf transformation. Therefore, if we obtain somehow positive solutions for the linear one we may obtain solutions for the original Burgers equation with random coefficients by means of the Cole-Hopf transformation.

To this end, consider  the Cole-Hopf transformation $U=-\frac{\partial}{\partial x}\ln V$, in terms of a \emph{strictly positive} random field
$V\in C_{\mathbb{F}}\big([0,T];\mathbb{L}^2(\Omega;C^3(\mathbb{R};\mathbb{R}^{+}))\big)$
with a semimartingale decomposition such that $\Psi^{V}\in \mathbb{L}_{\mathbb{F}}^2\big(0,T;C^3(\mathbb{R})\big).$ For every $0< t\leq T,$ It\^{o}'s formula and the uniqueness of the semimartingale decomposition imply that

\begin{equation}\label{eqn:TRANS}
dU=-\frac{\partial}{\partial x}\left[\frac{dV}{V}-\frac{(\Psi^{V})^2}{2V^2}dt\right] \qquad \text{and} \qquad
\Psi^{U}=-\frac{\partial}{\partial x}\left(\frac{\Psi^{V}}{V}\right),
\end{equation}
\vskip0.5cm
\noindent
and substituting back into (\ref{eqn:forwardSBURGERSred0}) we have

\begin{align}\label{eqn:GGGG}
-\frac{\partial}{\partial x}\left[\frac{dV}{V}-\frac{(\Psi^{V})^2}{2V^2}dt\right]&=\Bigg\{-\frac{1}{2} \ \sigma^2(t,x)
\frac{\partial^3}{\partial x^3}\ln V+a(t,x)\frac{\partial}{\partial x}\ln V\frac{\partial^2}{\partial x^2}\ln V\nonumber\\
&\qquad\,+g(t,x)\frac{\partial}{\partial x}\ln V \frac{\partial}{\partial x}\left(\frac{\Psi^{V}}{V}\right)-b(t,x)\frac{\partial^2}{\partial x^2}\ln V\nonumber\\
&\qquad\,-e(t,x)\frac{\partial}{\partial x}\ln V-s(t,x)\frac{\partial^2}{\partial x^2}\left(\frac{\Psi^{V}}{V}\right)\\
&\qquad\, -m(t,x)\frac{\partial}{\partial x}\left(\frac{\Psi^{V}}{V}\right)+f(t,x)\Bigg\}dt\nonumber\\
 &\quad\ -\frac{\partial}{\partial x}\left(\frac{\Psi^{V}}{V}\right)dW(t). \nonumber
\end{align}
Inspired by the
 methodology implied by the Cole-Hopf transformation in the linearization of (\ref{eqn:introBURGERS}),  we accordingly integrate the last equation with respect to $x$ and assume that
$\sigma, a, g,$ and $s$ are independent of the spatial variable, i.e.,
\begin{equation}\label{def:COEF}
\sigma(t,x)\equiv \sigma(t), \quad a(t,x)\equiv a(t), \quad g(t,x)\equiv g(t), \quad \text{and} \quad s(t,x)\equiv s(t)
\end{equation}
are stochastic processes. Therefore, we arrive at
\begin{align}\label{eqn:BSPDEBEFLIN}
dV=\Bigg\{&\frac{1}{2} \ \sigma^2(t)V_{xx}-\frac{1}{2} \ \Big(\sigma(t)^2+a(t)\Big)\frac{V_{x}^{2}}{V}-g(t)V\int\frac{V_{x}}{V}\frac{\partial}{\partial x}\left(\frac{\Psi^{V}}{V}\right)dx
\qquad\qquad\qquad\qquad\nonumber\\
&+V \bigg[\int b(t,x)\frac{\partial^2}{\partial x^2}\ln V dx+\int e(t,x)\frac{V_{x}}{V}\; dx\nonumber\\
&\qquad\qquad\qquad\qquad\qquad\quad\,\, +\int m(t,x)\frac{\partial}{\partial x}\left(\frac{\Psi^{V}}{V}\right)dx\bigg]\\
&+s(t)\Psi_{x}^{V}-s(t)\frac{\Psi^{V}V_{x}}{V}-V\int f(t,x)dx+\frac{(\Psi^{V})^2}{2V}\Bigg\}dt+\Psi^{V}dW(t)\, \nonumber.
\end{align}
This is an SPDE for the random field $V$ where $\Psi^{V}$ is a function of $V$, the exact form of which determines the choice of the function $\Psi^{U}$ through the second equation of
(\ref{eqn:TRANS}). This  is in general a nonlinear SPDE, whose form can be as complicated or even worse than the original Burgers equation. However, as we shall show, upon proper selection of the coefficients and the function $\Psi^{V}$ (equivalently $\Psi^{U}$) this equation may reduce to a linear one.

In order to compute the integrals of (\ref{eqn:BSPDEBEFLIN}) that contain the function $\Psi^{V}$,
it suffices to consider
a special ansatz;
we assume that
\begin{equation}\label{def:MPH}
\Psi^{V}(t,x)\triangleq \ell(t)V_{x}(t,x), \qquad \forall \;\; (t,x)\in[0,T]\times\mathbb{R}
\end{equation}
for a stochastic process $\ell: [0,T]\times\Omega\rightarrow \mathbb{R}$ to be selected afterwards. This choice leads to a simple form for the function $\Psi^{U}$;
making  use of the Cole-Hopf transformation, the second equation of (\ref{eqn:TRANS}) gives
\begin{equation}\label{eqn:MPB}
\Psi^{U}(t,x)=\ell(t)U_{x}(t,x), \qquad \forall \;\; (t,x)\in[0,T]\times\mathbb{R}.
\end{equation}
We next employ equation (\ref{def:MPH}) to reformulate the integral expressions of (\ref{eqn:BSPDEBEFLIN}) as
$$
\int \frac{V_{x}}{V}\frac{\partial}{\partial x}\left(\frac{\Psi^{V}}{V}\right)dx
=\frac{\ell(t)}{2}\left(\frac{V_{x}}{V}\right)^2$$
and, in conjunction with integration-by-parts, as
\begin{align*}
&\ \int b(t,x)\frac{\partial^2}{\partial x^2}\ln V dx
+\int e(t,x)\frac{V_{x}}{V} \;dx+\int m(t,x)\frac{\partial}{\partial x}\left(\frac{\Psi^{V}}{V}\right)dx\\ \\
=&\Big[b(t,x)+\ell(t)m(t,x)\Big]\frac{V_{x}}{V}+\int \Big[e(t,x)-b_{x}(t,x)-\ell(t)m_{x}(t,x)\Big] \frac{V_{x}}{V}\;dx.
\end{align*}
Hence, FSPDE (\ref{eqn:BSPDEBEFLIN}) becomes
\begin{align*}
dV=\Bigg\{&\frac{1}{2}\Big(\sigma^2(t)+2s(t)\ell(t)\Big)V_{xx}+ \Big[b(t,x)+\ell(t)m(t,x)\Big]V_{x}\\
&-\frac{1}{2}\; \Big[\sigma^2(t)+a(t)+\big(g(t)+2s(t)\big)\ell(t)-\ell^2(t)\Big]\frac{V_{x}^2}{V}\\
& +V\int \Big[e(t,x)-b_{x}(t,x)-\ell(t)m_{x}(t,x)\Big] \frac{V_{x}}{V}\;dx\\
&   -V\int f(t,x)dx\Bigg\}dt+\ell(t)V_{x}dW(t)\, .
\end{align*}
For this equation to be a well-possed linear parabolic SPDE with respect to the unknown field $V$ (by analogy to the deterministic case where it reduced to  the heat equation (\ref{eqn:introHEAT})), the coefficient in front of the parabolic term  $V_{xx}$ needs to be
strictly positive, i.e.
\begin{equation}\label{ineq:coeffVxx}
\sigma^2(t)+2s(t)\ell(t)> 0, \quad \forall\ t\in[0,T],
\end{equation}
and the coefficients of the terms  $V_{x}^{2}/V$ and $V_{x}/V$  should vanish.
Regarding the  coefficient of the term $V_{x}^{2}/V$, this is the case by defining
 \begin{equation}\label{def:genell}
 \ell(t)\triangleq \frac{g(t)+2s(t)\pm \sqrt{D(t)}}{2}, \quad 0\leq t\leq T,
 \end{equation}
 where we have assumed that
 \begin{equation}\label{def:D}
 D(t)\triangleq \big[g(t)+2s(t)\big]^2+4\big[\sigma(t)^2+a(t)\big]\geq 0, \quad \forall \ t\in[0,T].
 \end{equation}
We may further eliminate the  coefficient of the term $V_{x}/V$ by setting
\begin{equation*}\label{def:e}
e(t,x)\triangleq k_{\, x}(t,x),
\quad \forall \;\; (t,x)\in[0,T]\times\mathbb{R},
\end{equation*}
where
\begin{equation*}\label{def:k}
k(t,x)\triangleq b(t,x)+\ell(t)m(t,x), \quad \forall \;\; (t,x)\in[0,T]\times\mathbb{R},
\end{equation*}
is the coefficient of the term $U_{x}$ in (\ref{eqn:forwardSBURGERSred0}) thanks to (\ref{eqn:MPB}).
Finally, since $b, \, m$ remain arbitrary random fields we may assume without loss of generality that $k$ is an arbitrary random field of same regularity as well. A summary of our findings is included in the subsequent result.

\begin{theorem}\label{thm:shdhsc}
Assuming  (\ref{ineq:coeffVxx})-(\ref{def:D}), if  $U\in C_{\mathbb{F}}\big([0,T];\mathbb{L}^2(\Omega;C^2(\mathbb{R}))\big)$ is
 a random field that solves FSPDE
\begin{align}\label{eqn:forwardSBURGERSredmore}
dU&=\Bigg[\frac{1}{2} \ \big(\sigma^2(t)+2s(t)\ell(t)\big)U_{xx}+\big(a(t)+g(t)\ell(t)\big)UU_{x}+k(t,x)U_{x}\nonumber\\
 &\quad \ \ +k_{\, x}(t,x)U
 +f(t,x)\Bigg]dt+\ell(t)U_{x}dW(t), \qquad 0< t\leq T,\quad x\in \mathbb{R},\\
U(0,x)&=p(x),\qquad x\in \mathbb{R}\nonumber
 \end{align}
 then the random field $V$, given by the Cole-Hopf transformation, is of class $C_{\mathbb{F}}\big([0,T];\mathbb{L}^2(\Omega;C^3(\mathbb{R};\mathbb{R}^{+}))\big)$ and solves FSPDE
\begin{equation}\label{eqn:BSPDESH}
\begin{split}
dV&=\left[\frac{1}{2}\;\big(
\sigma^2(t)+2s(t)\ell(t)\big)V_{xx}+  k(t,x)V_{x}-c(t,x)V \right]dt\\
 &\quad \, +\ell(t)V_{x}dW(t),\qquad 0< t\leq T,\qquad x\in \mathbb{R},\phantom{\Big\{}\\
V(0,x)&=e^{-\int p(x)dx}\triangleq q(x), \qquad x\in\mathbb{R},
\end{split}
\end{equation}
where
\begin{equation}\label{def:ce}
c(t,x)\triangleq \int f(t,x)dx+\bar{c}(t)
\end{equation}
for every $(t,x)\in [0,T]\times\mathbb{R}$ and some process $\bar{c}(\cdot)$.

Furthermore,  FSPDE (\ref{eqn:BSPDESH}) admits a solution of the form
\begin{equation}\label{def:SOLV}
V(t,x)\triangleq G\big(t,x+H(t)\big), \qquad 0\leq t\leq T,\qquad x\in \mathbb{R},
\end{equation}
where
\begin{equation}\label{def:SINTH}
H(t)\triangleq \int_{0}^{\,t}\ell(s)dW(s), \qquad 0\leq t\leq T;
\end{equation}
in particular,  $G$ is a positive random field  of class $C_{\mathbb{F}}\big([0,T];\mathbb{L}^2(\Omega;C^3(\mathbb{R};\mathbb{R}^{+}))\big)$ that solves FSPDE
\begin{align}\label{eqn:BSPDEG}
dG&=\bigg\{\frac{1}{2}\ \Big[\sigma^2(t)+2s(t)\ell(t)-\ell^{2}(t)\Big]G_{xx}(t,x)
+k\big(t,x-H(t)\big)G_{x}(t,x)\nonumber\\
&\quad \ \ -c\big(t,x-H(t)\big)G(t,x)\bigg\}dt \qquad\text{on}\quad (0,T]\times\mathbb{R},\\
G(0,x)&=q\big(x\big)\quad \text{on} \quad\mathbb{R},\nonumber
\end{align}
subject to the condition
\begin{equation}\label{ineq:coeffGxx}
\sigma^2(t)+2s(t)\ell(t)-\ell^{2}(t)>0,\qquad 0\leq t\leq T.
\end{equation}

\end{theorem}

\begin{proof}
The first statement of the theorem comes directly out of the preceding analysis. To prove the second assertion of the theorem,
we seek for positive solutions of (\ref{eqn:BSPDESH}) that have the form of (\ref{def:SOLV}).
Here, we assume that $G$ is a random field of class $C_{\mathbb{F}}\big([0,T];\mathbb{L}^2(\Omega;C^3(\mathbb{R};\mathbb{R}^{+}))\big)$
with a semimartingale decomposition
in terms of  the random field pair $(A^{G},\Psi^{G})\in C_{\mathbb{F}}\big([0,T];\mathbb{L}^2(\Omega;C^1(\mathbb{R}))\big)\times \mathbb{L}_{\mathbb{F}}^2\big(0,T;C^2(\mathbb{R})\big).$
Employing the generalized IKW formula to (\ref{def:SOLV}), in conjunction with
(\ref{def:SINTH}),
we obtain
\begin{align*}
dV=&\bigg[A^{G}\big(t,x+H(t)\big)+\frac{1}{2}\,\ell^{2}(t) G_{xx}\big(t,x+H(t)\big)+\ell(t)\Psi_{x}^{G}\big(t,x+H(t)\big)\bigg]dt\\
&+\Big[\ell(t)G_{x}\big(t,x+H(t)\big)+\Psi^{G}\big(t,x+H(t)\big)\Big]dW(t)\nonumber
\end{align*}
for every $(t,x)\in (0,T]\times \mathbb{R}.$ Making also use of (\ref{def:SOLV}), we rewrite (\ref{eqn:BSPDESH}) as

\begin{align*}
 \phantom{\Bigg\{_{A}}dV&=\bigg[\frac{1}{2} \,
 \big(\sigma^2(t)+2s(t)\ell(t)\big)G_{xx}\big(t,x+H(t)\big)+k(t,x)G_{x}\big(t,x+H(t)\big)\\
&\qquad-c(t,x)G\big(t,x+H(t)\big) \bigg]dt
+\ell(t)G_{x}\big(t,x+H(t)\big)dW(t), \nonumber
\end{align*}
for every $(t,x)\in (0,T]\times\mathbb{R}.$
A comparison between the last two equations leads to the definitions
\begin{align*}
 A^{G}(t,x)&\triangleq \frac{1}{2}\Big[\sigma^2(t)+2s(t)\ell(t)-\ell^{2}(t)\Big]G_{xx}(t,x)
+k\big(t,x-H(t)\big)G_{x}(t,x)\\
&\quad -c\big(t,x-H(t)\big)G(t,x), \\
 \Psi^{G}(\cdot,\cdot)&\triangleq 0.
\end{align*}
Finally, substituting them in
the semimartingale decomposition of $\, G\, $, we derive the FSPDE of (\ref{eqn:BSPDEG}),
 whose well-posedness is ensured by (\ref{ineq:coeffGxx}) and the initial condition comes from (\ref{eqn:BSPDESH}) and (\ref{def:SOLV}) for $t=0$.
Regarding the positivity of $G$ see the following remark.
\end{proof}

\begin{remark}
 FSPDE (\ref{eqn:BSPDEG}) is a linear PDE with random coefficients, but does not contain any stochastic integral. This means that it can be treated pathwise rather than in an It\^o integration sense, a fact that simplifies its analysis immensely, and allows us to use qualitative results, e.g. maximum principles, to study properties of its solutions like positivity, monotonicity etc. In fact, for the special case of
constant coefficients with $k(\cdot,\cdot)=c(\cdot,\cdot)=0$ this PDE reduces to  (\ref{eqn:introHEAT}).
\end{remark}

\begin{remark} It is not hard to see that all the above inequalities of (\ref{ineq:coeffVxx}), (\ref{def:D}) and (\ref{ineq:coeffGxx})
hold simultaneously for various choices of the processes of (\ref{def:COEF}); e.g., in terms of process $\sigma(\cdot)$ we may set
$ \, a(\cdot)\triangleq\sigma^2(\cdot), \ g(\cdot)\triangleq\sigma(\cdot),  \  s(\cdot)\triangleq-\sigma(\cdot),$ and then $\ell(\cdot)\triangleq-2\sigma(\cdot)$ according to (\ref{def:genell}).

\end{remark}
\begin{remark}
The methodology presented in this section for the linearization of the forward random Burgers equation,
driven by a single Brownian motion, may be also generalized for spatially dependent noise; that is
$$d\widetilde{W}_\kappa(t,x)\triangleq \sum_{i=1}^{\kappa} \phi_{i}(x)\, dW_i(t), \quad \forall \;\; (t,x)\in [0,\infty)\times\mathbb{R},$$
where $W_i(\cdot), \, i=1,2,...,\kappa,$ are standard, real-valued Wiener processes and $\phi_i,\, i=1,2,...,\kappa,$ are functions modeling the spatial autocorrelation of the noise through the covariance operator, e.g. they can be a basis for $\mathbb{L}^2{(\mathbb{R})}.$
Equations of this type find interesting applications in interface modeling, of which a typical example is the Kardar-Parisi-Zhang (KPZ) equation (cf. \cite{Kardar}).
In this case, It\^{o}'s lemma and a formal differentiation of the associated to our analysis forward random
KPZ equation

\begin{align*}
dR&=\Bigg[\frac{1}{2} \ \sigma^2(t)R_{xx}-\frac{1}{2} \ \sigma^2(t)R_{x}^{\,2}+k(t,x)R_{x}+c(t,x)\Bigg]dt\\
&\quad \ +d\widetilde{W}_\kappa(t,x), \quad 0< t\leq T,\ x\in \mathbb{R},\nonumber\\\\
R(0,x)&=\rho(x),\qquad x\in \mathbb{R}\nonumber
 \end{align*}

 \noindent
lead, by introducing the random field $U\triangleq R_x,$ by considering the random field $\rho$ of class $\mathbb{L}^2\big(\Omega; C^1(\mathbb{R})\big)$  and recalling (\ref{def:ce}), to  the forward stochastic Burgers equation

\begin{align*}
dU&=\Bigg[\frac{1}{2} \ \sigma^2(t)U_{xx}-\sigma^2(t)UU_{x}+k(t,x)U_{x}+k_x(t,x)U+f(t,x)\Bigg]dt\\
&\quad \ +\frac{\partial}{\partial x}d\widetilde{W}_\kappa(t,x), \quad 0< t\leq T,\ x\in \mathbb{R},\nonumber\\
U(0,x)&=\rho_x(x),\qquad x\in \mathbb{R}\nonumber;
\end{align*}
this is the analogue of FSPDE  (\ref{eqn:forwardSBURGERSred0}) with conservative noise, i.e., the term of
$dW(t)$ is replaced by $\frac{\partial}{\partial x}d\widetilde{W}_\kappa(t,x)$.
Then, the random field $V\triangleq e^{-R}$ solves the linear forward
stochastic heat equation
\begin{align*}
dV&=\Bigg[\frac{1}{2} \ \sigma^2(t)V_{xx}+k(t,x)V_{x}+\left(\frac{1}{2}-f(t,x)\right)V\Bigg]dt\\
&\quad \ -Vd\widetilde{W}_\kappa(t,x), \quad 0< t\leq T,\ x\in \mathbb{R},\nonumber\\
V(0,x)&=e^{-\rho(x)},\qquad x\in \mathbb{R}.\nonumber
 \end{align*}

 The above calculus can be taken to the limit as $d\rightarrow \infty,$ where the series of $\widetilde{W}_\kappa$ can either be interpreted as a $Q$-Wiener process or
 as a cylindrical Brownian motion (cf. \cite{DaPrato4}) depending on the choice of the functions $\phi_i, \, i=1,2,...,\kappa.$ Of course, this may further require modifications in It\^{o}'s lemma, while in case of the cylindrical Brownian motion this limit has to be interpreted in a weak sense.
 Note also that a slightly different version of the KPZ equation  has been studied in
\cite{Bertini} for the fluctuation field of the interface profile in a microscopic growth model subject to a Wick renormalization of the nonlinearity.
\end{remark}
\vskip1cm

\section{Linearization of the Backward Random Burgers Equation}\label{sec:BRBE}

In Section \ref{sec:PACH}  we showed how the backward deterministic Burgers equation can be linearized using its connection with a system of FBSDEs,
through which we obtained an alternative probabilistic derivation of the celebrated Cole-Hopf transformation. In the present section we address the question of whether this analysis
 can be generalized for the \emph{backward} stochastic Burgers equation with random coefficients of (\ref{eqn:SBURGERS0}).  It will turn out that in order to answer this problem it is no longer enough to use the Cole-Hopf transformation that works for the deterministic or the forward random problem but rather we have to extend it in such a fashion as to take into account the intricate nature of the backward problem, and the presence of the related unknown process pair $(U,\Psi^{U})$.

For the simplicity of our mathematical
program which aims to the linearization of (\ref{eqn:SBURGERS0}), we also assume that the coefficients are bounded and infinitely differentiable with respect to the spatial variable $x$, with all their partial
derivatives bounded as well.
Carrying out though our analysis, we shall see that both weaker regularity conditions in $x$ may be imposed
 and explicit relationships among the coefficients will be in need.
\vskip0.5cm
\subsection{Connection with FBSDEs of random coefficients}
We consider the
forward process $X(\cdot)$ that is the solution of $X(t)=x+\int_{0}^{t}\sigma(s,X(s))dW(s)$ for $(t,x)\in [0,T]\times \mathbb{R},$
and seek for the corresponding backward given by $Y(t)=U(t,X(t))$
in terms of a solution $(U,\Psi^{U})\in C_{\mathbb{F}}\big([0,T];\mathbb{L}^2(\Omega;C^3(\mathbb{R}))\big)\times \mathbb{L}_{\mathbb{F}}^2\big(0,T;C^2(\mathbb{R})\big)$ of BSPDE (\ref{eqn:SBURGERS0}).
Note here that we have considered an additional order of smoothness for the pair $(U,\Psi^{U})$
than is required to constitute a solution of (\ref{eqn:SBURGERS0}). Nevertheless, this extra assumption
allows the application of the generalized
IKW formula, which coupled with (\ref{eqn:SBURGERS0}) yields

\begin{align*}
dY(t)=&\bigg\{\Big[a\big(t,X(t)\big)U_{x}\big(t,X(t)\big)+g\big(t,X(t)\big)\Psi^{U}\big(t,X(t)\big)\Big]U\big(t,X(t)\big)\\
&\ \ +b\big(t,X(t)\big)U_{x}\big(t,X(t)\big)
+e\big(t,X(t)\big)U\big(t,X(t)\big)\\
&\ \ +\Big(s\big(t,X(t)\big)+\sigma\big(t,X(t)\big)\Big)\Psi_{x}^{U}\big(t,X(t)\big)+m\big(t,X(t)\big)\Psi^{U}\big(t,X(t)\big)\\
&\ \ +f\big(t,X(t)\big)\bigg\}dt+ \Big[\sigma\big(t,X(t)\big)U_{x}\big(t,X(t)\big)+\Psi^{U}\big(t,X(t)\big)\Big]dW(t),
\end{align*}
for  $0\leq t<T.$
In order both to be consistent with the form of the deterministic Burgers equation of (\ref{eqn:BURGERS})
and to avoid matters of technical fuss in our following analysis, we shall consider that $g(\cdot)=\sigma(\cdot)\neq 0$ is a stochastic process.
Additionally, by setting
\begin{equation}
a(t,x)\triangleq g(t)\sigma(t), \quad b(t,x)\triangleq \sigma(t) m(t,x), \quad \text{and} \quad s(t,x)\triangleq-\sigma(t)
\end{equation}
for $(t,x)\in[0,T]\times\mathbb{R},$ we obtain the first part of the following result.
\begin{proposition} \label{prop:BSPDEANDFBSDEU}
(i) Suppose that the pair of random fields  $(U,\Psi^{U})$ is of class $ C_{\mathbb{F}}\big([0,T];\mathbb{L}^2(\Omega;C^3(\mathbb{R}))\big)\times \mathbb{L}_{\mathbb{F}}^2\big(0,T;C^2(\mathbb{R})\big)$
and solves the stochastic Burgers type BSPDE
\begin{align}\label{eqn:SBURGERSredmor}
 dU&=\bigg[-\frac{1}{2}\;    \sigma^2(t)U_{xx}+\sigma^{2}(t)UU_{x}+\sigma(t)U\Psi^{U}+\sigma(t)m(t,x)U_{x}\nonumber\\
 &\quad \ \ \, +e(t,x)U-\sigma(t)\Psi_{x}^{U}+m(t,x)\Psi^{U}+f(t,x)\bigg]dt\\
 &\quad+\Psi^{U}dW(t),\quad 0\leq t<T,\quad x\in \mathbb{R},\nonumber\\
U(T,x)&=p(x),\qquad x\in \mathbb{R}.\nonumber
 \end{align}
 \vskip0.3cm
 Then the triplet $(X,Y,Z)$ of stochastic processes, given by
 \begin{equation}\label{def:triplBSB}
 \begin{split}
 X(t)&\triangleq x+\int_{0}^{t}\sigma(s)dW(s),\qquad
 Y(t)\triangleq U\big(t,X(t)\big),\\
 \text{and} \qquad \ Z(t)&\triangleq \sigma(t)U_{x}\big(t,X(t)\big)+\Psi^{U}\big(t,X(t)\big)
 \end{split}
 \end{equation}
 \vskip0.3cm
 \noindent
 for every $t\in[0,T],$ satisfies the FBSDEs
 
\begin{equation}\label{eqn:FBSDESBred}
  \begin{split}
  dX(t)&=\sigma(t)dW(t),\quad 0<t\leq T,\\
  dY(t)&=\Big[\sigma(t)Y(t)Z(t)+e\big(t,X(t)\big)Y(t)+m\big(t,X(t)\big) Z(t)\\
  &\quad\ \ +f\big(t,X(t)\big)\Big]dt+Z(t)dW(t),\quad 0\leq t<T,\\
  X(0)&=x, \qquad Y(T)=p\big(X(T)\big).
  \end{split}
  \end{equation}
\\
(ii) Consider that the pair of random fields $(V,\Psi^{V})$ belongs to the class $ C_{\mathbb{F}}\big([0,T];\mathbb{L}^2(\Omega;C^4(\mathbb{R}))\big)\times \mathbb{L}_{\mathbb{F}}^2\big(0,T;C^3(\mathbb{R})\big)$
and satisfies the stochastic heat type BSPDE

\begin{align}\label{eqn:IKWG}
dV&=\Big\{-\frac{1}{2}\,\sigma^2(t) V_{xx}+\sigma(t)d(t,x)V_{x}-\sigma(t)\Psi_{x}^{V}+d(t,x)\Psi^{V}\nonumber\\
&\quad\quad+c(t,x)V\Big\}dt+\Psi^{V}dW(t),\quad 0\leq t<T,\quad x\in \mathbb{R},\\
V(T,x)&=q(x),\qquad x\in \mathbb{R}.\nonumber
\end{align}
Then the triplet $(x,y,z)$ of stochastic processes, defined by

\begin{equation}\label{def:triplBSH}
 \begin{split}
 x(t)&\triangleq x+\int_{0}^{t}\sigma(s)dW(s),\qquad
 y(t)\triangleq V\big(t,x(t)\big),\\
 \text{and} \qquad z(t)&\triangleq \sigma(t)V_{x}\big(t,x(t)\big)+\Psi^{V}\big(t,x(t)\big)
 \end{split}
 \end{equation}
 \vskip0.3cm
 \noindent
 for every $t\in[0,T],$ solves the FBSDEs
 
\begin{equation}
  \begin{split}\label{eqn:FBSDESH}
  dx(t)&=\sigma(t)dW(t), \quad 0<t\leq T,\\
  dy(t)&=\Big[c\big(t,x(t)\big)y(t)+d\big(t,x(t)\big)z(t)\Big]dt+z(t)dW(t),\quad 0\leq t<T,\\
  x(0)&=x,\qquad y(T)=q\big(x(T)\big).
  \end{split}
  \end{equation}
  \end{proposition}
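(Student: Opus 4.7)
The plan is a direct application of the generalized Itô-Kunita-Wentzell formula (Proposition \ref{prop:IKW}) in both parts. The two BSPDEs have been crafted so that, upon substitution of $X(t)$ (respectively $x(t)$) into $U$ (respectively $V$), all the spatial-derivative and mixed-variation terms produced by IKW cancel precisely against the drift terms written into the equations, leaving only the FBSDE drifts of (\ref{eqn:FBSDESBred}) and (\ref{eqn:FBSDESH}).

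For part (i), I would first record the semimartingale data of the forward process: from $dX(t)=\sigma(t)\,dW(t)$ we have $A^{X}(t)=0$ and $\Psi^{X}(t)=\sigma(t)$. The BSPDE (\ref{eqn:SBURGERSredmor}) identifies $A^{U}$ with the bracketed drift and $\Psi^{U}$ with the diffusion coefficient. Applying Proposition \ref{prop:IKW} to $Y(t)=U(t,X(t))$ then gives
\begin{align*}
dY(t) = \Bigl[A^{U}(t,X(t)) + \sigma(t)\Psi^{U}_{x}(t,X(t)) + \tfrac{1}{2}\sigma^{2}(t)U_{xx}(t,X(t))\Bigr]dt \\
+ \bigl[\Psi^{U}(t,X(t)) + \sigma(t)U_{x}(t,X(t))\bigr]\,dW(t).
\end{align*}
Substituting the explicit form of $A^{U}$, the terms $-\tfrac{1}{2}\sigma^{2}U_{xx}$ and $-\sigma\Psi^{U}_{x}$ cancel the two IKW correction terms. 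What remains is $\sigma^{2}UU_{x} + \sigma U\Psi^{U} + \sigma m U_{x} + eU + m\Psi^{U} + f$, which factors as $\sigma U(\sigma U_{x}+\Psi^{U})+m(\sigma U_{x}+\Psi^{U})+eU+f$. By the definition of $Z$ in (\ref{def:triplBSB}) this equals $\sigma(t)Y(t)Z(t)+m(t,X(t))Z(t)+e(t,X(t))Y(t)+f(t,X(t))$, and the diffusion coefficient is exactly $Z(t)$. The forward equation for $X$ and the terminal condition $Y(T)=U(T,X(T))=p(X(T))$ are immediate, so (\ref{eqn:FBSDESBred}) follows.

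For part (ii) the computation is structurally identical but cleaner: with $A^{x}=0$, $\Psi^{x}=\sigma$, Proposition \ref{prop:IKW} applied to $y(t)=V(t,x(t))$ yields a drift $A^{V}+\sigma\Psi^{V}_{x}+\tfrac{1}{2}\sigma^{2}V_{xx}$ and a diffusion $\Psi^{V}+\sigma V_{x}$. Plugging in $A^{V}$ from (\ref{eqn:IKWG}), the $-\tfrac{1}{2}\sigma^{2}V_{xx}$ and $-\sigma\Psi^{V}_{x}$ terms cancel, leaving $\sigma d V_{x}+d\Psi^{V}+cV = d(\sigma V_{x}+\Psi^{V})+cV = d(t,x(t))z(t)+c(t,x(t))y(t)$. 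Together with $z(t)$ being the resulting diffusion and the trivially transferred terminal condition, (\ref{eqn:FBSDESH}) is established.

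The only real obstacle is bookkeeping: one must verify that the assumed smoothness of $(U,\Psi^{U})$ and $(V,\Psi^{V})$, namely $C_{\mathbb{F}}([0,T];\mathbb{L}^{2}(\Omega;C^{3}(\mathbb{R})))\times \mathbb{L}^{2}_{\mathbb{F}}(0,T;C^{2}(\mathbb{R}))$ and the analogous one with an extra order of differentiability, is exactly what is needed to justify invoking Proposition \ref{prop:IKW} (which requires $A^{\mathrm{F}}$ of class $C^{0,1}$ and $\Psi^{\mathrm{F}}$ of class $C^{0,2}$ in the spatial variable). Once this regularity check is in place, the proposition reduces to the algebraic cancellations described above; no further structural argument is required.
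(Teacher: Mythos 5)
Your proof is correct and follows essentially the same route as the paper: the paper derives part (i) in the discussion preceding the proposition by applying the generalized IKW formula to $Y(t)=U(t,X(t))$ and observing exactly the cancellations and factorizations $\sigma U(\sigma U_x+\Psi^U)+m(\sigma U_x+\Psi^U)=\sigma YZ+mZ$ that you describe, and its stated proof of part (ii) is the same one-line IKW computation. Your added regularity check against the hypotheses of Proposition \ref{prop:IKW} is consistent with the paper's standing assumptions and does not change the argument.
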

\vskip0.5cm
\begin{proof} We need only to show (ii). Indeed, the BSPDE (\ref{eqn:FBSDESH}) follows readily  through (\ref{eqn:IKWG}), (\ref{def:triplBSH}), and the generalized IKW formula.
\end{proof}
\smallskip

The first part of this proposition provides a solution to the FBSDEs (\ref{eqn:FBSDESBred}) by means of a solution of the Burgers BSPDE (\ref{eqn:SBURGERSredmor}).
In Section \ref{sec:PACH} though, we saw that the Cole-Hopf transformation
reduces the Burgers equation (\ref{eqn:BURGERS})
to the linear heat equation (\ref{eqn:HEAT}). Therefore, its next statement addresses the most general type of a backward linear stochastic heat equation (cf. (\ref{eqn:IKWG})) that is associated with an FBSDE system (cf. (\ref{eqn:FBSDESH})).
\vskip0.5cm

 \subsection{Characterization  of the generalized Cole-Hopf transformation  in terms of an SPDE}
  \vskip0.3cm
  Our goal in this subsection is to investigate wether there exists a \emph{stochastic} version of the Cole-Hopf that reduces (\ref{eqn:SBURGERSredmor}) to (\ref{eqn:IKWG}) for appropriate random fields $c,d:[0,T]\times\mathbb{R}\times\Omega\rightarrow\mathbb{R}$
   and $q:\mathbb{R}\times\Omega\rightarrow\mathbb{R}$.
 According to Remark \ref{rem:DRCH}, in the deterministic case the Cole-Hopf transformation is represented by the point transformation (\ref{eqn:TRANSF}).
 Thus, by analogy we shall seek
for a point transformation of  the form
\begin{align}
X(t)&=x(t),
\nonumber \\
Y(t)&=\mathcal{Y}\big(t,x(t),y(t),z(t)\big),
\label{eqn:STRANSF} \\
Z(t)&=\mathcal{Z}\big(t,x(t),y(t),z(t)\big),
\nonumber
\end{align}
that relates the solutions $(X,Y,Z)$ and $(x,y,z)$ of (\ref{eqn:FBSDESBred}) and (\ref{eqn:FBSDESH}) respectively,
in terms of suitable random fields $\mathcal{Y},$ $\mathcal{Z}:[0,T]\times\mathbb{R}^3\times\Omega\rightarrow\mathbb{R}$.
This transformation
will provide explicitly the Cole-Hopf
one in the generalized case of \emph{stochastic} coefficients.
\begin{assumption}\label{as:semisigmag}
Let the stochastic coefficient $\sigma(\cdot)$ obtain a
semimartingale decomposition,
where $A^{\sigma}(\cdot)$ is an integrable a.s., $\mathbb{F}$-progressively measurable stochastic process, and $\Psi^{\sigma}(\cdot)$ is a square-integrable a.s., $\mathbb{F}$-progressively measurable one. Furthermore, let the latter process obtain a similar semimartingale decomposition in terms of the stochastic processes $A^{\Psi^{\sigma}}(\cdot)$ and $\Psi^{\Psi^{\sigma}}(\cdot),$ respectively.
\end{assumption}

\begin{proposition}\label{prop:PRSCH}
 Consider Assumption \ref{as:semisigmag}. Let also $(x,y,z)$ be a solution of FBSDEs (\ref{eqn:FBSDESH}), $(\mathcal{Y},\Psi^{\mathcal{Y}})$ be a random field pair of class
 $C_{\mathbb{F}}\big([0,T];\mathbb{L}^2(\Omega;C^3(\mathbb{R}^3))\big)$ $\times \mathbb{L}_{\mathbb{F}}^2\big(0,T;C^2(\mathbb{R}^3)\big)$
 and satisfies the BSPDE

\begin{align}\label{eqn:BSPDEY}
d\mathcal{Y}=&\Bigg\{-\frac{1}{2} \ \sigma^2(t)\mathcal{Y}_{xx}-\frac{1}{2}\ z^2\mathcal{Y}_{yy}-\frac{1}{2}\ h^2\mathcal{Y}_{zz}-\sigma(t)z\mathcal{Y}_{xy}-\sigma(t)h\mathcal{Y}_{xz}-zh\mathcal{Y}_{yz}\nonumber\\ 
&\quad +\sigma^{2}(t)\mathcal{Y}\mathcal{Y}_{x}+\sigma(t)z\mathcal{Y}\mathcal{Y}_{y}
+\sigma(t)h\mathcal{Y}\mathcal{Y}_{z}+\sigma(t)m(t,x)\mathcal{Y}_{x}\nonumber\\ 
&\qquad\qquad\qquad\qquad\qquad\quad\,\,\,-\Big(d(t,x)z+c(t,x)y-m(t,x)z\Big)\mathcal{Y}_{y}\nonumber\\
\phantom{d\mathcal{Y}=}
&\ \ \,  -\Bigg[\bigg(A^{\sigma}(t)+\sigma^2(t)d_{x}\big(t,x(t)\big)-d\big(t,x(t)\big)\Psi^{\sigma}(t)
\\
&\qquad\qquad\qquad\qquad\qquad\qquad\,\,\quad +\sigma(t)c\big(t,x(t)\big)-\frac{(\Psi^{\sigma}(t))^2}{\sigma(t)}\bigg)\frac{z(t)}{\sigma(t)}\nonumber\\
&\qquad\ \ \,+\frac{1}{2}\;\sigma^2(t)\Psi_{xx}^{V}\big(t,x(t)\big)-\bigg(\sigma(t)d\big(t,x(t)\big)
+\Psi^{\sigma}(t)\bigg)\Psi_{x}^{V}\big(t,x(t)\big)\nonumber\\
&\qquad\ \ \, -\bigg(A^{\sigma}(t)-d\big(t,x(t)\big)\Psi^{\sigma}(t)\nonumber\\
&\qquad\qquad\qquad\quad\,
+\sigma(t)c\big(t,x(t)\big)-\frac{(\Psi^{\sigma}(t))^2}{\sigma(t)}\bigg)\frac{\Psi^{V}\big(t,x(t)\big)}{\sigma(t)}\nonumber\\
&\qquad \ \ \, +A^{\Psi^{{V}}}(t)+\sigma(t)\Psi_{x}^{\Psi^{V}}\big(t,x(t)\big)-\left(d\big(t,x(t)\big)+\frac{\Psi^{\sigma}(t)}{\sigma(t)}\right)\Psi^{\Psi^{V}}
\nonumber\\
&\qquad\ \ \, +\sigma(t)c_{x}\big(t,x(t)\big)y(t)+h\left(\frac{\Psi^{\sigma}(t)}{\sigma(t)}+d\big(t,x(t)\big)-m(t,x)\right)\Bigg]\mathcal{Y}_{z}\nonumber\\
&\quad +e(t,x)\mathcal{Y}+\sigma(t)\mathcal{Y}\Psi^{\mathcal{Y}}
-\sigma(t)\Psi^{\mathcal{Y}}_{x}-z\Psi^{\mathcal{Y}}_{y}-h\Psi^{\mathcal{Y}}_{z}+m(t,x)\Psi^{\mathcal{Y}}\nonumber\\
&\ \ \, +f(t,x)\Bigg\}dt+\Psi^{\mathcal{Y}}dW(t)\qquad \text{on}\qquad [0,T)\times\mathbb{R}^3\nonumber
\end{align}
for any $h\in\mathbb{R}$, and
\begin{equation}\label{eqn:Z}
\mathcal{Z}\triangleq \Psi^{\mathcal{Y}}+\sigma(t)\mathcal{Y}_{x}+z(t)\mathcal{Y}_{y}+h(t)\mathcal{Y}_{z}
\end{equation}
evaluated at $\big(t,x(t),y(t),z(t)\big)$ on $[0,T).$ Then the process triplet $(X,Y,Z)$
given by the point transformation (\ref{eqn:STRANSF}) solves the FBSDEs (\ref{eqn:FBSDESBred}).
\end{proposition}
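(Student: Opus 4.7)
My plan is to mimic the argument of Proposition \ref{pro:PDEY}, now replacing It\^o's formula by the generalized It\^o-Kunita-Wentzell (IKW) formula of Proposition \ref{prop:IKW} and using the tailored BSPDE (\ref{eqn:BSPDEY}) to absorb the additional terms produced by the randomness of the coefficients. The identity $X(t)=x(t)$ is immediate from (\ref{eqn:STRANSF}) together with the fact that the forward dynamics in (\ref{eqn:FBSDESBred}) and (\ref{eqn:FBSDESH}) coincide; so the task reduces to verifying that $d\mathcal{Y}(t,x(t),y(t),z(t))$ reproduces the backward dynamics of (\ref{eqn:FBSDESBred}) with $Z=\mathcal{Z}$.

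The first, and principal, technical step is to obtain an explicit semimartingale decomposition of the auxiliary process $z(t)=\sigma(t)V_{x}(t,x(t))+\Psi^{V}(t,x(t))$. I would (i) differentiate the heat-type BSPDE (\ref{eqn:IKWG}) with respect to $x$ to read off the decompositions of the random fields $V_{x}$ and $\Psi^{V}$; (ii) apply Proposition \ref{prop:IKW} to $V_{x}(\cdot,x(\cdot))$ and $\Psi^{V}(\cdot,x(\cdot))$; and (iii) combine the two by It\^o's product rule, invoking Assumption \ref{as:semisigmag} for the decomposition of $\sigma(\cdot)$, to assemble $dz(t)=A^{z}(t)\,dt+h(t)\,dW(t)$. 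A direct comparison of terms then shows that $A^{z}(t)$ matches exactly the large expression appearing as the coefficient of $\mathcal{Y}_{z}$ in (\ref{eqn:BSPDEY}), except for a single $hm$ summand deliberately retained so as to feed the final $m\mathcal{Z}$ contribution. This book-keeping is the main obstacle of the proof.

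With $dz$ in hand, the second step is to apply (\ref{2b}) to $\mathcal{Y}(t,x(t),y(t),z(t))$, giving $dY=A^{Y}\,dt+\Psi^{Y}\,dW$ with
\begin{align*}
A^{Y} &= A^{\mathcal{Y}} + \mathcal{Y}_{y}\bigl(cy+dz\bigr) + \mathcal{Y}_{z}A^{z} + \sigma\Psi^{\mathcal{Y}}_{x} + z\Psi^{\mathcal{Y}}_{y} + h\Psi^{\mathcal{Y}}_{z} \\
&\quad + \tfrac{1}{2}\bigl(\sigma^{2}\mathcal{Y}_{xx} + z^{2}\mathcal{Y}_{yy} + h^{2}\mathcal{Y}_{zz} + 2\sigma z\mathcal{Y}_{xy} + 2\sigma h\mathcal{Y}_{xz} + 2zh\mathcal{Y}_{yz}\bigr), \\
\Psi^{Y} &= \Psi^{\mathcal{Y}} + \sigma\mathcal{Y}_{x} + z\mathcal{Y}_{y} + h\mathcal{Y}_{z},
\end{align*}
where every derivative of $\mathcal{Y}$ and $\Psi^{\mathcal{Y}}$ is evaluated at $(t,x(t),y(t),z(t))$. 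By definition (\ref{eqn:Z}), $\Psi^{Y}=\mathcal{Z}=Z$, which settles the diffusion coefficient.

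Finally, substituting the BSPDE (\ref{eqn:BSPDEY}) for $A^{\mathcal{Y}}$ triggers cascading cancellations: the negative quadratic-variation block in (\ref{eqn:BSPDEY}) annihilates the $\tfrac{1}{2}(\cdots)$ contribution; the $-\sigma\Psi^{\mathcal{Y}}_{x}-z\Psi^{\mathcal{Y}}_{y}-h\Psi^{\mathcal{Y}}_{z}$ terms kill the mixed $\Psi^{\mathcal{Y}}\Psi$ sum; the $-(dz+cy-mz)\mathcal{Y}_{y}$ piece cancels $\mathcal{Y}_{y}(cy+dz)$, leaving the residual $mz\mathcal{Y}_{y}$; and the huge bracketed $\mathcal{Y}_{z}$-coefficient cancels $\mathcal{Y}_{z}A^{z}$ while leaving the residual $mh\mathcal{Y}_{z}$ by the construction of the previous step. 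What remains is
\begin{align*}
A^{Y} &= \sigma\mathcal{Y}\bigl(\Psi^{\mathcal{Y}}+\sigma\mathcal{Y}_{x}+z\mathcal{Y}_{y}+h\mathcal{Y}_{z}\bigr) + m\bigl(\Psi^{\mathcal{Y}}+\sigma\mathcal{Y}_{x}+z\mathcal{Y}_{y}+h\mathcal{Y}_{z}\bigr) + e\mathcal{Y} + f \\
&= \sigma Y\mathcal{Z} + m\mathcal{Z} + eY + f,
\end{align*}
which is precisely the backward drift in (\ref{eqn:FBSDESBred}) and completes the identification.
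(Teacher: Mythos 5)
Your proposal is correct and follows essentially the same route as the paper: first assemble the semimartingale decomposition $dz(t)=A^{z}(t)\,dt+h(t)\,dW(t)$ from the heat-type BSPDE (\ref{eqn:IKWG}) and Assumption \ref{as:semisigmag}, then apply the generalized IKW formula to $\mathcal{Y}(t,x(t),y(t),z(t))$ and use (\ref{eqn:BSPDEY}) to cancel everything except $\sigma Y\mathcal{Z}+eY+m\mathcal{Z}+f$, with (\ref{eqn:Z}) identifying the diffusion coefficient as $\mathcal{Z}$. Your observation that the $\mathcal{Y}_{z}$-coefficient in (\ref{eqn:BSPDEY}) equals $A^{z}$ minus the $hm$ summand (and likewise the residual $mz\mathcal{Y}_{y}$) matches the paper's decomposition (\ref{eqn:SMDz}) exactly.
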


\begin{proof}
Firstly we provide the semimartingale decomposition of the process $z(\cdot)$ in (\ref{def:triplBSH}). Given Assumption \ref{as:semisigmag}, considering that the pair  $(V,\Psi^{V})$ belongs to
 $ C_{\mathbb{F}}\big([0,T];\mathbb{L}^2(\Omega;C^4(\mathbb{R}))\big)\times \mathbb{L}_{\mathbb{F}}^2\big(0,T;C^3(\mathbb{R})\big)$, and assuming that the random field $\Psi^{V}$ has a semimartingale decomposition
 for $(A^{\Psi^{V}},\Psi^{\Psi^{V}})\in C_{\mathbb{F}}\big([0,T];\mathbb{L}^2(\Omega;C^1(\mathbb{R}))\big)\times \mathbb{L}_{\mathbb{F}}^2\big(0,T;C^2(\mathbb{R})\big),$
employ the product rule and the generalized IKW formula
to process $z(\cdot)$ of (\ref{def:triplBSH}), in conjunction with (\ref{eqn:IKWG}),
to get that
\begin{align*}
dz(t)=&\Bigg\{A^{\Psi^{V}}\big(t,x(t)\big)+\sigma(t)\bigg(\sigma(t)d\big(t,x(t)\big)+\Psi^{\sigma}(t)\bigg)V_{xx}\big(t,x(t)\big)\\
&\,\,\ +\bigg[A^{\sigma}(t)+\sigma^2(t)d_{x}\big(t,x(t)\big)+\sigma(t)c\big(t,x(t)\big)\bigg]V_{x}\big(t,x(t)\big)\\
 &\,\,\ +\sigma(t)c_{x}\big(t,x(t)\big)V\big(t,x(t)\big)
+\frac{1}{2}\;\sigma^2(t)\Psi_{xx}^{V}\big(t,x(t)\big)\\
&\,\,\ +\Big(\sigma(t)d\big(t,x(t)\big)+\Psi^{\sigma}(t)\Big)\Psi_{x}^{V}\big(t,x(t)\big)\\
&\,\,\ +\sigma(t)d_{x}\big(t,x(t)\big)\Psi^{V}\big(t,x(t)\big)+\sigma(t)\Psi_{x}^{\Psi^{V}}\big(t,x(t)\big)\Bigg\}dt\\
&+\bigg[\Psi^{\Psi^{V}}\big(t,x(t)\big)+\sigma^2(t)V_{xx}\big(t,x(t)\big)\\
&\qquad\qquad\qquad\quad\,+\Psi^{\sigma}(t)V_{x}\big(t,x(t)\big)
+2\sigma(t)\Psi_{x}^{V}\big(t,x(t)\big)\bigg]dW(t)
\end{align*}
for $0\leq t< T.$ Therefore, set
\begin{equation*}
 h(t)\triangleq \Psi^{\Psi^{V}}\big(t,x(t)\big)+\sigma^2(t)V_{xx}\big(t,x(t)\big)+\Psi^{\sigma}(t)V_{x}\big(t,x(t)\big)
+2\sigma(t)\Psi_{x}^{V}\big(t,x(t)\big)
\end{equation*}
for every $t\in[0,T]$ and recall the third equation of (\ref{def:triplBSH}) to derive the relationships
\begin{align}\label{eqn:DIFFV}
V_{x}\big(t,x(t)\big)&=\frac{z(t)-\Psi^{V}\big(t,x(t)\big)}{\sigma(t)}\ , \\
V_{xx}\big(t,x(t)\big)&=\frac{h(t)-\Psi^{\Psi^{V}}\big(t,x(t)\big)}{\sigma^2(t)}
+\frac{\Psi^{\sigma}(t)\bigg(\Psi^{V}\big(t,x(t)\big)-z(t)\bigg)}{\sigma^3(t)}\nonumber\\
\nonumber\\
&\quad\, -\frac{2\Psi_{x}^{V}\big(t,x(t)\big)}{\sigma(t)}\ , \nonumber
\end{align}
and deduce eventually for any $t\in[0,T)$ the semimartingale decomposition

\begin{align}\label{eqn:SMDz}
dz(t)=&\Bigg\{\bigg[A^{\sigma}(t)+\sigma^2(t)d_{x}\big(t,x(t)\big)-\Psi^{\sigma}(t)d\big(t,x(t)\big)\nonumber\\
&\qquad\qquad\qquad\qquad\qquad\,\,\,\,\quad+\sigma(t)c\big(t,x(t)\big)-\frac{(\Psi^{\sigma}(t))^2}{\sigma(t)}\bigg]\frac{z(t)}{\sigma(t)}\\
&\quad+\frac{1}{2}\;\sigma^2(t)\Psi_{xx}^{V}\big(t,x(t)\big)-\bigg(\sigma(t)d\big(t,x(t)\big)
+\Psi^{\sigma}(t)\bigg)\Psi_{x}^{V}\big(t,x(t)\big)\nonumber
\end{align}
\begin{align*}
\phantom{dz(t)=}
&\quad-\bigg[A^{\sigma}(t)-\Psi^{\sigma}(t)d\big(t,x(t)\big)\\
&\qquad\qquad\  \ \,\,\,+\sigma(t)c\big(t,x(t)\big)-\frac{(\Psi^{\sigma}(t))^2}{\sigma(t)}\bigg]\frac{\Psi^{V}\big(t,x(t)\big)}{\sigma(t)}\nonumber\\
&\quad +A^{\Psi^{{V}}}(t)+\sigma(t)\Psi_{x}^{\Psi^{V}}\big(t,x(t)\big)-\left(d\big(t,x(t)\big)+\frac{\Psi^{\sigma}(t)}{\sigma(t)}\right)\Psi^{\Psi^{V}}
\nonumber\\
&\quad+\sigma(t)c_{x}\big(t,x(t)\big)y(t)+\bigg(\frac{\Psi^{\sigma}(t)}{\sigma(t)}+d\big(t,x(t)\big)\bigg)h(t)\Bigg\}dt
+h(t)dW(t)\nonumber.
\end{align*}

We are ready now to argue that BSPDE (\ref{eqn:BSPDEY}) constitutes a sufficient condition
for the point transformation (\ref{eqn:STRANSF})  to produce solutions
of the FBSDEs (\ref{eqn:FBSDESBred}) directly out of solutions of the FBSDEs (\ref{eqn:FBSDESH}). Indeed, this follows directly
from the generalized IKW formula applied to the second equation of (\ref{eqn:STRANSF}), in combination with (\ref{eqn:FBSDESH}), (\ref{eqn:SMDz}), and (\ref{eqn:BSPDEY}).
\end{proof}

We shall focus next on establishing an equivalent characterization
for the solutions of BSPDE (\ref{eqn:BSPDEY}), in order to determine the point transformation of (\ref{eqn:STRANSF}). A byproduct of this analysis will be the Cole-Hopf transformation
between the solutions $U$ and $V$ of the BSPDEs (\ref{eqn:SBURGERSredmor}) and (\ref{eqn:IKWG}), respectively;
cf. Remark \ref{rem:SCH}.
\begin{proposition}\label{prop:SPRSCH}
Let Assumption \ref{as:semisigmag} hold and set
\begin{equation}\label{def:coeffd}
d(t,x)\triangleq m(t,x)\qquad \text{on} \quad [0,T]\times\mathbb{R}.
\end{equation}
Then the BSPDE of (\ref{eqn:BSPDEY}) admits solutions of the form
 \begin{equation}\label{eqn:SOLY}
 \mathcal{Y}(t,x,y,z)=-\frac{z}{\sigma(t)y-r(t,x)}+\frac{\Psi^{\sigma}(t)}{\sigma^{2}(t)}+\frac{\sigma(t)r_{x}(t,x)-\Psi^{\sigma}(t)y
 +\Psi^{r}(t,x)}{\sigma(t)\big(\sigma(t)y-r(t,x)\big)}
 \end{equation}
 on $[0,T)\times\mathbb{R}^{3}$,
where
\begin{equation}\label{def:ec}
e(t,x)\triangleq\sigma(t)m_{x}(t,x) \  \ \text{and}\  \ c(t,x)\triangleq-\int f(t,x)dx+\bar{c}(t)\quad \text{on}\ [0,T]\times\mathbb{R}
\end{equation}
 for some stochastic process $\bar{c}(\cdot)$, the random field $\,r:[0,T]\times\mathbb{R}\times\Omega\rightarrow \mathbb{R}\,$ satisfies the \emph{linear} BSPDE
\begin{align}\label{eqn:BSPDEr}
dr=\Bigg\{&-\frac{1}{2}\;\sigma^2(t)r_{xx}+\Big(\Psi^{\sigma}(t)+\sigma(t)m(t,x)\Big)r_{x}\\
&+\left[\sigma(t)c(t,x)-\Psi^{\sigma}(t)m(t,x)-\frac{(\Psi^{\sigma}(t))^2}{\sigma(t)}+A^{\sigma}(t)\right]\frac{r}{\sigma(t)}-\sigma(t)\Psi_{x}^{r}\nonumber\\
&+\left(\frac{\Psi^{\sigma}(t)}{\sigma(t)}+m(t,x)\right)\Psi^{r}\Bigg\}dt+\Psi^{r}dW(t)\qquad \text{on}\qquad [0,T)\times\mathbb{R},\nonumber
\end{align}
and the constraint
\begin{align}\label{eqn:bigconstraint}
\qquad\qquad\ &\bigg[-\frac{2(\Psi^{\sigma}(t))^2}{\sigma(t)}+\Psi^{\Psi^{\sigma}}(t)\bigg]r_{x}\nonumber\\
+&\Bigg[5\frac{\big(\Psi^{\sigma}(t)\big)^3}{\sigma^2(t)}
-3\frac{\Psi^{\Psi^{\sigma}}(t)\Psi^{\sigma}(t)}{\sigma(t)}-\Psi^{\Psi^{\sigma}}(t)m(t,x)
\nonumber\\
&\qquad\qquad\  \ \ +\frac{2\big(\Psi^{\sigma}(t)\big)^2m(t,x)}{\sigma(t)}
-\frac{2A^{\sigma}(t)\Psi^{\sigma}(t)}{\sigma(t)}+A^{\Psi^{\sigma}}(t)\Bigg]\frac{r}{\sigma(t)}\nonumber\\
-&\frac{1}{2} \ \sigma^2(t)\Psi_{xx}^{r}
+\Big(2\Psi^{\sigma}(t)+\sigma(t)m(t,x)\Big)\Psi_{x}^{r}\nonumber\\
+&\Bigg[2\frac{A^{\sigma}(t)}{\sigma(t)}-5\frac{(\Psi^{\sigma}(t))^2}{\sigma^2(t)}
+\frac{\Psi^{\Psi^{\sigma}}(t)}{\sigma(t)}+c(t,x)-2\frac{\Psi^{\sigma}(t)m(t,x)}{\sigma(t)}\Bigg]\Psi^{r}\nonumber\\
-&\sigma(t)\Psi_{x}^{\Psi^{r}}
+\bigg(2\frac{\Psi^{\sigma}(t)}{\sigma(t)}+m(t,x)\bigg)\Psi^{\Psi^{r}}
+\frac{1}{2}\;\sigma^3(t)\Psi_{xx}^{V}\\
-&\sigma(t)\Big(\sigma(t)m(t,x)+\Psi^{\sigma}(t)\Big)\Psi_{x}^{V}\nonumber\\
-&\bigg(A^{\sigma}(t)-\Psi^{\sigma}(t)m(t,x)+\sigma(t)c(t,x)
-\frac{(\Psi^{\sigma}(t))^2}{\sigma(t)}\bigg)\Psi^{V}\nonumber\\
+&\sigma(t)A^{\Psi^{V}}+\sigma^2(t)\Psi_{x}^{\Psi^{V}}
-\Big(\sigma(t)m(t,x)+\Psi^{\sigma}(t)\Big)\Psi^{\Psi^{V}}-A^{\Psi^{r}}=0\nonumber
\end{align}
holds on $[0,T)\times\mathbb{R}$.
\end{proposition}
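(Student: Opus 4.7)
The strategy is to verify the proposition by \emph{direct substitution}: plug the ansatz (\ref{eqn:SOLY}) into the BSPDE (\ref{eqn:BSPDEY}) and read off the conditions on $c,d,e,r$ together with the compatibility relation needed for it to hold identically in the free parameters $y,z,h$. This is the natural generalization of the deterministic calculation in Remark \ref{rem:DRCH}, where $\mathcal{Y}=-z/(\sigma y)$ already solves (\ref{eqn:PDE}) for every $h$; the additional terms in (\ref{eqn:SOLY}) involving $r,\Psi^\sigma$ and $\Psi^r$ are precisely what is needed to absorb the contributions produced by the randomness of the coefficients.

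First I would compute the partial derivatives $\mathcal{Y}_x,\mathcal{Y}_y,\mathcal{Y}_z,\mathcal{Y}_{xx},\mathcal{Y}_{yy},\mathcal{Y}_{zz},\mathcal{Y}_{xy},\mathcal{Y}_{xz},\mathcal{Y}_{yz}$ of (\ref{eqn:SOLY}). Observe that $\mathcal{Y}$ is affine in $z$ with coefficient $-1/(\sigma y-r)$, so $\mathcal{Y}_{zz}=0$ automatically, and the remaining derivatives are explicit rational functions of $\sigma(t)y-r(t,x)$. The time-and-$\omega$ dependence of $\mathcal{Y}$ enters only through $\sigma(t),\Psi^{\sigma}(t),r(t,x)$ and $\Psi^{r}(t,x)$, so the generalized It\^o--Kunita--Wentzell formula (Proposition \ref{prop:IKW}) applies with the semimartingale pairs $(A^{\sigma},\Psi^{\sigma})$ and $(A^{\Psi^{\sigma}},\Psi^{\Psi^{\sigma}})$ from Assumption \ref{as:semisigmag}, together with $(A^{r},\Psi^{r})$ and $(A^{\Psi^{r}},\Psi^{\Psi^{r}})$, to produce the intrinsic semimartingale decomposition of $\mathcal{Y}(\cdot,x,y,z)$ on $[0,T)$.

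Second, equating this intrinsic decomposition with the right-hand side of (\ref{eqn:BSPDEY}) and invoking uniqueness of the semimartingale decomposition yields two identities: the $dW$-part determines $\Psi^{\mathcal{Y}}$ (and, via (\ref{eqn:Z}), the process $\mathcal{Z}$), while the $dt$-part must hold as a polynomial identity in $(y,z,h)$. Because $\mathcal{Y}_{zz}=0$ the coefficient of $h^{2}$ vanishes automatically. The coefficient of $h$, assembled from $\mathcal{Y}_{xz},\mathcal{Y}_{yz},\mathcal{Y}_{z}$, forces the structural identification $d(t,x)=m(t,x)$ of (\ref{def:coeffd}). Splitting the remaining $h$-independent part according to its powers of $(\sigma y-r)^{-1}$: the $(\sigma y-r)^{0}$-piece gives $e(t,x)=\sigma(t)m_{x}(t,x)$, and an $x$-antiderivation of the $f$-term delivers $c(t,x)=-\int f(t,x)\,dx+\bar c(t)$, which is (\ref{def:ec}); the $(\sigma y-r)^{-1}$-piece reproduces precisely the linear BSPDE (\ref{eqn:BSPDEr}) for $r$; and the $(\sigma y-r)^{-2}$-piece, which collects the second-order cross terms in $y$ and $x$, becomes the residual compatibility (\ref{eqn:bigconstraint}).

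The main obstacle I expect is the bookkeeping in this final step: the expansion entangles $\Psi^{\sigma},\Psi^{\Psi^{\sigma}},\Psi^{r},\Psi^{\Psi^{r}}$ together with $\Psi^{V},A^{\Psi^{V}}$ and their $x$-derivatives, each weighted by various powers of $\sigma(t)$, and one must check carefully that after the cancellations enforced by (\ref{eqn:BSPDEr}) and (\ref{def:ec}) nothing remains except the constraint (\ref{eqn:bigconstraint}). Once this direct-substitution pass is completed, the converse --- that any $r$ solving (\ref{eqn:BSPDEr}) with the coefficient identifications (\ref{def:coeffd})--(\ref{def:ec}) and for which (\ref{eqn:bigconstraint}) holds produces via (\ref{eqn:SOLY}) a bona fide solution of (\ref{eqn:BSPDEY}) --- is immediate, since every step of the coefficient matching is reversible.
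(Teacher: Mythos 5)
Your plan is computationally sound and would establish the stated existence result, but it runs the argument in the opposite direction from the paper, and this is worth spelling out. The paper does not verify the ansatz (\ref{eqn:SOLY}); it \emph{derives} it. Since (\ref{eqn:BSPDEY}) must hold for every $h\in\mathbb{R}$, the coefficient of $h^{2}$ forces $\mathcal{Y}_{zz}=0$, hence $\mathcal{Y}(t,x,y,z)=\mathcal{P}(t,x,y)z+\mathcal{Q}(t,x,y)$; the coefficient of $h$ (together with the standing definition $d=m$) then becomes a linear polynomial identity in $z$ whose solution pins down $\mathcal{P}=-1/(\sigma y-r)$ and the exact form of $\mathcal{Q}$, with $r$ appearing as the integration field. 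Only afterwards does the paper match the finite-variation parts --- organized as linear polynomials in $z$ (giving equations for $A^{\mathcal{P}}$ and $A^{\mathcal{Q}}$) and then as zero-polynomials in $y$ of first and second order --- to extract (\ref{def:ec}), (\ref{eqn:BSPDEr}) and (\ref{eqn:bigconstraint}). Your verification route reaches the same identities and suffices for the ``admits solutions of the form'' claim, but it loses the classification content that the paper exploits later (the remark that (\ref{eqn:SOLY}) is the \emph{most general} solution of (\ref{eqn:BSPDEY}) subject to (\ref{eqn:bigconstraint})); the derivation buys that for free, while your approach buys a logically leaner argument in which the reversibility you invoke at the end is genuinely immediate. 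Two smaller corrections: the identification $d(t,x)=m(t,x)$ is \emph{imposed} in (\ref{def:coeffd}) as a hypothesis and then used to simplify the $h$-linear coefficient --- it is not forced by that coefficient, which could be solved for other choices of $d$ at the price of a different $\mathcal{P},\mathcal{Q}$; and your grouping by powers of $(\sigma y-r)^{-1}$ is workable but does not line up cleanly with which condition comes from which piece (the definitions of $e$ and $c$, the equation for $r$, and the constraint all emerge mixed across the $y$-polynomial coefficients of the $A^{\mathcal{P}}$- and $A^{\mathcal{Q}}$-equations, not one per power). Finally, be aware that your proposal defers the entire expansion to ``bookkeeping''; that expansion, including the semimartingale decompositions of $\mathcal{P}$ and $\mathcal{Q}$ via It\^{o} applied to (\ref{eqn:PQ})--(\ref{eqn:Qred}), is the actual substance of the paper's proof.
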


\begin{proof}
Since (\ref{eqn:BSPDEY}) holds for all $h \in
{\mathbb R}$, we see that from the term quadratic in $h$ we get
$\mathcal{Y}_{zz}=0,$ which on the real line shows that
\begin{equation}\label{eqn:LINEARY}
\mathcal{Y}(t,x,y,z)=\mathcal{P}(t,x,y)z+\mathcal{Q}(t,x,y) \qquad \text{on}\qquad [0,T)\times\mathbb{R}^3
\end{equation}
for appropriate random fields $\mathcal{P}, \mathcal{Q}\in C_{\mathbb{F}}\big([0,T];\mathbb{L}^2(\Omega;C^3(\mathbb{R}^2))\big)$, that each obtains a semimartingale decomposition.
Additionally, from the terms linear in $h$ and (\ref{def:coeffd}) we find that
\[ -\sigma(t)\mathcal{Y}_{xz}-z\mathcal{Y}_{yz}+\sigma(t)\mathcal{Y}\mathcal{Y}_{z}
-\frac{\Psi^{\sigma}(t)}{\sigma(t)} \mathcal{Y}_{z}-\Psi^{\mathcal{Y}}_{z}=0;  \]
substituting (\ref{eqn:LINEARY}), the left-hand-side becomes a linear polynomial with respect to $z$
which implies that
\begin{align}\label{eqn:PQ}
\mathcal{P}(t,x,y)&=-\frac{1}{\sigma(t)y-r(t,x)} \qquad \text{and} \\ \nonumber\\
\label{eqn:Qred}\mathcal{Q}(t,x,y)&=\frac{\Psi^{\sigma}(t)}{\sigma^{2}(t)}+\frac{\sigma(t)r_{x}(t,x)-\Psi^{\sigma}(t)y+\Psi^{r}(t,x)}{\sigma(t)\big(\sigma(t)y-r(t,x)\big)}
\end{align}
on $[0,T)\times\mathbb{R}^2$, in terms of a random field $r:[0,T]\times\mathbb{R}\times\Omega\rightarrow\mathbb{R}$
with a semimartingale decomposition to be determined.

As a consequence of this analysis, the BSPDE (\ref{eqn:BSPDEY}) takes the reduced form
\begin{align}
d\mathcal{Y}=\Bigg\{&-\frac{1}{2} \ \sigma^2(t)\mathcal{Y}_{xx}-\frac{1}{2}\ z^2\mathcal{Y}_{yy}-\sigma(t)z\mathcal{Y}_{xy}+\sigma^{2}(t)\mathcal{Y}\mathcal{Y}_{x}
+\sigma(t)z\mathcal{Y}\mathcal{Y}_{y}
\nonumber\\
&-\Bigg[\bigg(A^{\sigma}(t)+\sigma^2(t)m_{x}\big(t,x(t)\big)\\
&\qquad\qquad\quad-\Psi^{\sigma}(t)m\big(t,x(t)\big)
+\sigma(t)c\big(t,x(t)\big)-\frac{(\Psi^{\sigma}(t))^2}{\sigma(t)}\bigg)\frac{z(t)}{\sigma(t)}\nonumber\\
&\qquad+\frac{1}{2}\;\sigma^2(t)\Psi_{xx}^{V}\big(t,x(t)\big)-\bigg(\sigma(t)m\big(t,x(t)\big)
+\Psi^{\sigma}(t)\bigg)\Psi_{x}^{V}\big(t,x(t)\big)\nonumber\\
&\qquad-\bigg(A^{\sigma}(t)-\Psi^{\sigma}(t)m\big(t,x(t)\big)\nonumber\\
&\qquad\qquad\qquad\ +\sigma(t)c\big(t,x(t)\big)-\frac{(\Psi^{\sigma}(t))^2}{\sigma(t)}\bigg)\frac{\Psi^{V}\big(t,x(t)\big)}{\sigma(t)}+A^{\Psi^{{V}}}(t)\nonumber
\end{align}
\begin{align*}
&\qquad +\sigma(t)\Psi_{x}^{\Psi^{V}}\big(t,x(t)\big)-\left(m\big(t,x(t)\big)+\frac{\Psi^{\sigma}(t)}{\sigma(t)}\right)\Psi^{\Psi^{V}}\nonumber\\
&\qquad\qquad\qquad\ \qquad\qquad\qquad\qquad\qquad\ +\sigma(t)c_{x}\big(t,x(t)\big)y(t)\Bigg]\mathcal{Y}_{z}\nonumber\\
&+\sigma(t)m(t,x)\mathcal{Y}_{x}-c(t,x)y\mathcal{Y}_{y}
+e(t,x)\mathcal{Y}+\sigma(t)\mathcal{Y}\Psi^{\mathcal{Y}}
-\sigma(t)\Psi^{\mathcal{Y}}_{x}\nonumber\\
&-z\Psi^{\mathcal{Y}}_{y}+m(t,x)\Psi^{\mathcal{Y}}+f(t,x)\Bigg\}dt+\Psi^{\mathcal{Y}}dW(t)\quad \text{on}\quad [0,T)\times\mathbb{R}^3\;,\nonumber
\end{align*}
and from (\ref{eqn:LINEARY}), (\ref{eqn:PQ}), and (\ref{eqn:Qred}) obtains solutions in the form of (\ref{eqn:SOLY}).
Through substitution of (\ref{eqn:LINEARY})
and comparing the finite-variation terms of the two sides, we arrive again
at an equation between two linear polynomials with respect to $z$ which yields the relationships:
\begin{align}
\quad A^{\mathcal{P}}&=-\frac{1}{2}\ \sigma^2(t)\mathcal{P}_{xx}-\sigma(t)\mathcal{Q}_{xy}+\sigma^{2}(t)\mathcal{Q}\mathcal{P}_{x}+\sigma^{2}(t)\mathcal{P}\mathcal{Q}_{x}
+\sigma(t)\mathcal{Q}\mathcal{Q}_{y}\nonumber\\
&\quad\,+\sigma(t)m(t,x)\mathcal{P}_{x}-c(t,x)y\mathcal{P}_{y}+e(t,x)\mathcal{P}
+\sigma(t)\mathcal{P}\Psi^{\mathcal{Q}}+\sigma(t)\mathcal{Q}\Psi^{\mathcal{P}}
\nonumber\\
&\,\,\,\ \, \,-\bigg[A^{\sigma}(t)+\sigma^2(t)m_{x}(t,x)+\sigma(t)c(t,x)\nonumber\\
&\qquad\qquad\qquad\qquad\qquad\,\,\quad\ \ \, -\Psi^{\sigma}(t)m(t,x)
-\frac{(\Psi^{\sigma}(t))^2}{\sigma(t)}\bigg]\frac{\mathcal{P}}{\sigma(t)}\nonumber\\
&\quad\,-\sigma(t)\Psi^{\mathcal{P}}_{x}
-\Psi^{\mathcal{Q}}_{y}+m(t,x)\Psi^{\mathcal{P}},\nonumber\\
\label{eqn:APAQ} \\
A^{\mathcal{Q}}&=-\frac{1}{2} \ \sigma^2(t)\mathcal{Q}_{xx}+\sigma(t)\bigg(\sigma(t)\mathcal{Q}+m(t,x)\bigg)\mathcal{Q}_{x}-c(t,x)y\mathcal{Q}_{y}+e(t,x)\mathcal{Q}\nonumber\\
&\quad\,-\bigg[\frac{1}{2}\;\sigma^2(t)\Psi_{xx}^{V}\big(t,x(t)\big)-\bigg(\sigma(t)m\big(t,x(t)\big)
+\Psi^{\sigma}(t)\bigg)\Psi_{x}^{V}\big(t,x(t)\big)\nonumber\\
& \ \ \ \qquad-\bigg(A^{\sigma}(t)-\Psi^{\sigma}(t)m\big(t,x(t)\big)\nonumber\\
&\qquad\qquad\qquad\ \ \ \ +\sigma(t)c\big(t,x(t)\big)
-\frac{(\Psi^{\sigma}(t))^2}{\sigma(t)}\bigg)\frac{\Psi^{V}\big(t,x(t)\big)}{\sigma(t)}\nonumber\\
& \ \ \ \qquad +A^{\Psi^{V}}(t)+\sigma(t)\Psi_{x}^{\Psi^{V}}\big(t,x(t)\big)-\left(m\big(t,x(t)\big)
+\frac{\Psi^{\sigma}(t)}{\sigma(t)}\right)\Psi^{\Psi^{V}}\nonumber\\
& \ \ \ \qquad +\sigma(t)c_{x}\big(t,x(t)\big)y\bigg]\mathcal{P}-\sigma(t)\Psi^{\mathcal{Q}}_{x}+\bigg(\sigma(t)\mathcal{Q}+m(t,x)\bigg)\Psi^{\mathcal{Q}}+f(t,x),\nonumber
\end{align}
both on $[0,T)\times\mathbb{R}^2.$
Employ It\^{o} differentials to (\ref{eqn:PQ}) and (\ref{eqn:Qred}) to compute explicitly the semimartingale decompositions of
$\mathcal{P}$ and $\mathcal{Q}$,
and make the substitutions in (\ref{eqn:APAQ}) to end up at two zero-polynomials in $y$, of first and second order respectively,
which imply eventually the definitions of (\ref{def:ec}), the BSPDE (\ref{eqn:BSPDEr}), and the constraint (\ref{eqn:bigconstraint}).
\end{proof}

BSPDE (\ref{eqn:BSPDEr}) admits the apparent solution of $r(\cdot\, ,\cdot)=0$; then, (\ref{eqn:SOLY}) simplifies to
\begin{equation}\label{eqn:DEFcalY}
\mathcal{Y}(t,x,y,z)=-\frac{z}{\sigma(t)y} \qquad \text{on}\qquad [0,T)\times\mathbb{R}^3,
\end{equation}
as well as, solving (\ref{eqn:bigconstraint}) for $A^{\Psi^{V}}$ and substituting back to the semimartingale decomposition of $\Psi^{V}$,  we get the BSPDE constraint
\begin{align}\label{eqn:midconstraint}
d\Psi^{V}=\bigg\{&-\frac{1}{2}\;\sigma^2(t)\Psi_{xx}^{V}
+\Big[\sigma(t)m(t,x)+\Psi^{\sigma}(t)\Big]\Psi_{x}^{V}\nonumber\\
&+\left(\frac{A^{\sigma}(t)}{\sigma(t)}-\frac{\Psi^{\sigma}(t)}{\sigma(t)}m(t,x)+c(t,x)-\frac{(\Psi^{\sigma}(t))^2}{\sigma^2(t)}\right)\Psi^{V}\\
&-\sigma(t)\Psi_{x}^{\Psi^{V}}+\left(\frac{\Psi^{\sigma}(t)}{\sigma(t)}+m(t,x)\right)\Psi^{\Psi^{V}}\bigg\}dt+\Psi^{\Psi^{V}}dW(t).\nonumber
\end{align}
on $[0,T)\times\mathbb{R}.$
Moreover, combining (\ref{def:triplBSB}), (\ref{def:triplBSH}), (\ref{eqn:STRANSF}), and (\ref{eqn:DEFcalY}) we have that
\begin{align}\label{eqn:relatUandV}
\quad U\big(t,x(t)\big)=Y(t)=\mathcal{Y}\big(t,x(t),y(t),z(t)\big)&=-\frac{z(t)}{\sigma(t)y(t)}\\ \nonumber\\
&=-\frac{V_{x}\big(t,x(t)\big)}{V\big(t,x(t)\big)}
-\frac{\Psi^{V}\big(t,x(t)\big)}{\sigma(t)V\big(t,x(t)\big)}\nonumber,
\end{align}
 which is always the case if
 \begin{equation}\label{eqn:SCH}
 U=-\frac{\partial}{\partial x}\ln V-\frac{\Psi^{V}}{\sigma(t)V}
 \end{equation}
holds for every $(t,x)\in[0,T)\times\mathbb{R}$. In fact for $t=T$, putting together the terminal conditions of (\ref{eqn:SBURGERSredmor}) and (\ref{eqn:IKWG}), we get that the random field $q$ must satisfy the condition
 \begin{equation}\label{eqn:relatpandq}
 p(x)=-\frac{\partial}{\partial x}\ln q(x)-\frac{\Psi^{V}(T,x)}{\sigma(T)q(x)}, \qquad x\in\mathbb{R}.
 \end{equation}

 \begin{remark}\label{rem:SCH}
 The relationship of (\ref{eqn:SCH}) is
 the \emph{generalized} Cole-Hopf transformation that provides the pattern to construct solutions $(U,\Psi^{U})$ of the
stochastic Burgers BSPDE (\ref{eqn:SBURGERSredmor}) from solutions $(V,\Psi^{V})$ of the stochastic linear heat equation (\ref{eqn:IKWG}), subject to the additional
\emph{linear} constraint of (\ref{eqn:midconstraint}) and the terminal condition of (\ref{eqn:relatpandq}); a \emph{verification} result is provided below (cf. Theorem \ref{thm:SCH}). Apparently in a deterministic setting (\ref{eqn:SCH}) reduces to the standard Cole-Hopf transformation of Section \ref{sec:PACH} and (\ref{eqn:midconstraint}) becomes a tautology.

One can immediately see that considering either deterministic or random coefficients the point transformations $\mathcal{Y}$
of (\ref{eqn:TRANSF}) and (\ref{eqn:STRANSF}) are both of the same form; cf. Remark \ref{rem:DRCH} and (\ref{eqn:DEFcalY}). However, the application of the
probabilistic characterization of the Cole-Hopf transformation, deployed in the previous section, led to Proposition \ref{prop:PRSCH} and eventually concluded to
the additional constraint for the random field $\Psi^{V}$ of (\ref{eqn:midconstraint}), which vanishes in the deterministic case.
\end{remark}

\begin{theorem}\label{thm:SCH}
Consider Assumption \ref{as:semisigmag}, definitions (\ref{def:coeffd}) and (\ref{def:ec}), and a random field pair $(V,\Psi^{V})\in C_{\mathbb{F}}\big([0,T];\mathbb{L}^2(\Omega;C^4(\mathbb{R}))\big)\times \mathbb{L}_{\mathbb{F}}^2\big(0,T;C^3(\mathbb{R})\big)$ that satisfies the heat-type BSPDE (\ref{eqn:IKWG}) with $V(\cdot\, ,\cdot)>0$, subject to the constraints (\ref{eqn:midconstraint}) for the pair $(\Psi^{V},\Psi^{\Psi^{V}})$ of class $C_{\mathbb{F}}\big([0,T];\mathbb{L}^2(\Omega;C^3(\mathbb{R}))\big)\times \mathbb{L}_{\mathbb{F}}^2\big(0,T;C^2(\mathbb{R})\big)$ and (\ref{eqn:relatpandq}) for the random field $q$. Then the random field pair $(U,\Psi^{U})\in C_{\mathbb{F}}\big([0,T];\mathbb{L}^2(\Omega;C^3(\mathbb{R}))\big)\times \mathbb{L}_{\mathbb{F}}^2\big(0,T;C^2(\mathbb{R})\big)$ defined through (\ref{eqn:SCH}) is a solution of the Burgers-type BSPDE  (\ref{eqn:SBURGERSredmor}).
\end{theorem}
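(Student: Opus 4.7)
The plan is to take the generalized Cole-Hopf transformation (\ref{eqn:SCH}) as a definition of $U$ in terms of the given pair $(V,\Psi^{V})$, compute the It\^o differential $dU$ directly, and then substitute the equations and constraints satisfied by $V$, $\Psi^{V}$, and $\sigma$ to recover (\ref{eqn:SBURGERSredmor}). Since this is a verification result, no fixed-point or contraction argument is needed; the whole proof reduces to stochastic calculus bookkeeping. The generalized It\^o-Kunita-Wentzell formula of Proposition \ref{prop:IKW} is the right tool, given that the smoothness assumptions placed on $(V,\Psi^{V})$ and on its semimartingale coefficients are exactly what is needed to differentiate and compose freely.

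First, I would split $U = U_{1} + U_{2}$ with $U_{1} \triangleq -\partial_{x}\ln V$ and $U_{2} \triangleq -\Psi^{V}/(\sigma V)$, and compute each differential separately. For $U_{1}$, I would apply It\^o's formula to the real-valued function $v \mapsto -\ln v$ composed with the semimartingale $V(\cdot,x)$ for fixed $x$, then differentiate in $x$ (legitimate under the regularity assumptions on $(V,\Psi^{V})$), obtaining expressions of the form $-A^{V}/V$, $(\Psi^{V})^{2}/(2V^{2})$, and $-\Psi^{V}/V$ for the $dt$ and $dW(t)$ parts, each to be differentiated once more in $x$. For $U_{2}$, I would view $\Psi^{V}/V$ as the ratio of two semimartingale-valued random fields, apply It\^o to $1/V$ and then the product rule with $\Psi^{V}$ (producing a cross-variation term $\Psi^{V}\Psi^{\Psi^{V}}/V^{2}$), and finally multiply by $1/\sigma$ using the product rule, which brings in the semimartingale decomposition of $1/\sigma$ derived from Assumption \ref{as:semisigmag}, together with the cross-variation between $1/\sigma$ and $\Psi^{V}/V$ (which involves $\Psi^{\sigma}\Psi^{\Psi^{V}}/(\sigma^{2}V)$ and $-\Psi^{\sigma}(\Psi^{V})^{2}/(\sigma^{2}V^{2})$). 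Reading off the coefficient of $dW(t)$ at the end produces a formula for $\Psi^{U}$, and the coefficient of $dt$ produces a candidate drift.

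Next, I would insert the drift $A^{V}$ from (\ref{eqn:IKWG}), the drift $A^{\Psi^{V}}$ from the constraint (\ref{eqn:midconstraint}), and $A^{\sigma}$, $\Psi^{\sigma}$, $\Psi^{\Psi^{\sigma}}$ from Assumption \ref{as:semisigmag}, together with the identifications $d(t,x) = m(t,x)$ and $e(t,x) = \sigma(t)m_{x}(t,x)$ from (\ref{def:coeffd}) and (\ref{def:ec}). The remaining task is a systematic cancellation: the terms quadratic in $\Psi^{V}/V$ recombine through $\partial_{x}\bigl((\Psi^{V})^{2}/(2V^{2})\bigr)$ to account for the Burgers nonlinearity $\sigma^{2}(t)UU_{x}$ and the coupling $\sigma(t)U\Psi^{U}$; the terms linear in $V_{x}/V$ and $\Psi^{V}/V$ produce the first-order spatial terms $\sigma(t)m(t,x)U_{x}$, $e(t,x)U$, $m(t,x)\Psi^{U}$, and $-\sigma(t)\Psi_{x}^{U}$; and the terms independent of $V_{x}$ and $\Psi^{V}$ collapse to $f(t,x)$ after differentiating $c(t,x) = -\int f\,dx + \bar c(t)$ once in $x$. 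The terminal condition $U(T,x)=p(x)$ is immediate from (\ref{eqn:SCH}) evaluated at $t=T$ and the compatibility assumption (\ref{eqn:relatpandq}).

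The main obstacle will not be conceptual but combinatorial: organizing the cross-variation terms so that the cancellation is transparent. The role of the constraint (\ref{eqn:midconstraint}) is precisely to make the drift $A^{\Psi^{V}}$ equal the combination that annihilates the parasitic terms produced by the nonzero $\Psi^{\sigma}$ and the quadratic variation between $\Psi^{V}$ and $V$. Equivalently, (\ref{eqn:midconstraint}) is obtained by setting the coefficient of $\mathcal{Y}_{z}$ in (\ref{eqn:BSPDEY}) equal to zero along the choice (\ref{eqn:DEFcalY}), so by construction the computation closes. I would present the cancellation by grouping the output of the IKW computation according to the monomials $(V_{x}/V)^{k}(\Psi^{V}/V)^{\ell}$ for $k+\ell\le 2$, verifying that each group matches the corresponding term in (\ref{eqn:SBURGERSredmor}). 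Finally, membership of $(U,\Psi^{U})$ in $C_{\mathbb{F}}([0,T];\mathbb{L}^{2}(\Omega;C^{3}(\mathbb{R})))\times \mathbb{L}^{2}_{\mathbb{F}}(0,T;C^{2}(\mathbb{R})))$ follows from the regularity of $(V,\Psi^{V})$, the strict positivity $V>0$, and the standing boundedness and smoothness assumptions on the coefficients.
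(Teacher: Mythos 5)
Your proposal is correct and follows essentially the same route as the paper: a direct verification by applying It\^o/IKW calculus to the generalized Cole--Hopf transformation (\ref{eqn:SCH}), substituting the heat-type BSPDE (\ref{eqn:IKWG}), the constraint (\ref{eqn:midconstraint}), and the semimartingale decomposition of $\sigma$, and then matching the resulting drift and martingale parts against (\ref{eqn:SBURGERSredmor}) term by term, with the terminal condition handled by (\ref{eqn:relatpandq}). The only difference is organizational (splitting $U$ into $-\partial_x\ln V$ and $-\Psi^V/(\sigma V)$ and grouping by monomials in $V_x/V$ and $\Psi^V/V$, where the paper instead computes $U_x$, $UU_x$, $U\Psi^U$, $U_{xx}$, $\Psi^U_x$ separately and substitutes into the Burgers equation), which does not change the substance of the argument.
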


\begin{remark}
It is important at this point to comment on the constraints as expressed in equations (\ref{eqn:midconstraint}) and (\ref{eqn:relatpandq}).
The backward heat equation (\ref{eqn:IKWG}) admits a unique solution pair $(V,\Psi^{V})$. Then, the uniqueness of the semimartingale decomposition for the random field $\Psi^{V}$ provides us with a uniquely determined $\Psi^{\Psi^{V}}$.
In Proposition \ref{cor:solvableburg} below we present two special cases of heat-type BSPDE (\ref{eqn:IKWG}), whose solution satisfies these constraints for given coefficients of the original Burgers-type BSPDE (\ref{eqn:SBURGERSredmor}).

In general, for a given final condition $q$ a fruitful way of handling these constraints is to substitute in them the pair $(\Psi^{V},\Psi^{\Psi^{V}}),$  as defined by the solution of (\ref{eqn:IKWG}),
 and then find the general class of coefficients of the original problem so that the constraints are satisfied.
 This approach provides us with large families of exactly solvable backward Burgers equations with random coefficients; see Subsection \ref{subsec:EXAMPLESCONSTRAINTS}.
\end{remark}

\begin{remark}
It is worth mentioning that, regarding (\ref{def:coeffd}), (\ref{def:ec}), and solution pairs   $(V,\Psi^{V})$ and  $(r,\Psi^{r})$ of the BSPDEs (\ref{eqn:IKWG}) and (\ref{eqn:BSPDEr}), respectively, the random field $\mathcal{Y}$ of (\ref{eqn:STRANSF}) defined through (\ref{eqn:SOLY})
provides the most general form of solution to  (\ref{eqn:BSPDEY}), subject to the  constraint (\ref{eqn:bigconstraint}). In view of
(\ref{eqn:relatUandV}) and (\ref{eqn:SCH}) we obtain here the relationship
\begin{equation}
U=-\frac{V_{x}}{V-r}
-\frac{\Psi^{V}}{\sigma(t)V-r}
+\frac{\Psi^{\sigma}(t)}{\sigma^{2}(t)}+\frac{\sigma(t)r_{x}-\Psi^{\sigma}(t)V+\Psi^{r}}{\sigma(t)\big(\sigma(t)V-r\big)}\nonumber
\end{equation}
for every $(t,x)\in[0,T)\times\mathbb{R},$ which establishes the \emph{most general formulation} of the Cole-Hopf transformation
between the BSPDEs of (\ref{eqn:SBURGERSredmor}) and  (\ref{eqn:IKWG}).
\end{remark}

\begin{proof} \emph{\textsc{of theorem \ref{thm:SCH}}}:
For $0\leq t<T$ and $x\in\mathbb{R}$, It\^{o}'s rule, (\ref{eqn:SCH}), (\ref{eqn:IKWG}), and  (\ref{eqn:midconstraint}) yield
\begin{align}\label{eqn:VerU}
dU=\Bigg\{&\sigma(t)\frac{\Psi_{xx}^{V}}{2V}-\Big(\sigma(t)m(t,x)+\Psi^{\sigma}(t)\Big)\frac{\Psi_{x}^{V}}{\sigma(t)V}
+\Psi^{\sigma}(t)m(t,x)\frac{\Psi^{V}}{\sigma^2(t)V}\nonumber\\
&+\frac{\Psi_{x}^{\Psi^{V}}}{V}-m(t,x)\frac{\Psi^{\Psi^{V}}}{\sigma(t)V}-\sigma(t)\frac{\Psi^{V}V_{xx}}{2V^2}+m(t,x)\frac{\Psi^{V}V_{x}}{V^2}\\
&+m(t,x)\frac{(\Psi^{V})^2}{\sigma(t)V^2}-\frac{\Psi^{V}\Psi_{x}^{V}}{V^2}
-\frac{(\Psi^{V})^3}{\sigma(t)V^3}-\Psi^{\sigma}(t)\frac{(\Psi^{V})^2}{\sigma^2(t)V^2}
+\frac{\Psi^{V}\Psi^{\Psi^{V}}}{\sigma(t)V^2}\nonumber\\
&+\sigma^2(t)\frac{V_{xxx}}{2V}-\sigma^2(t)\frac{V_{xx}V_{x}}{2V^2}
-\sigma(t)m_{x}(t,x)\frac{V_{x}}{V}-\sigma(t)m(t,x)\frac{V_{xx}}{V}\nonumber\\
&+\sigma(t)m(t,x)\frac{(V_{x})^2}{V^2}-m_{x}(t,x)\frac{\Psi^{V}}{V}-m(t,x)\frac{\Psi_{x}^{V}}{V}
+m(t,x)\frac{\Psi^{V}V_{x}}{V^2}\nonumber\\
&+\sigma(t)\frac{\Psi_{xx}^{V}}{V}-\sigma(t)\frac{\Psi_{x}^{V}V_{x}}{V^2}-c_{x}(t,x)
+\frac{\Psi_{x}^{V}\Psi^{V}}{V^2}-\frac{(\Psi^{V})^2V_{x}}{V^3}\Bigg\}dt\nonumber\\
+&\Bigg\{-\frac{\Psi^{\Psi^{V}}}{\sigma(t)V}+\frac{(\Psi^{V})^2}{\sigma(t)V^2}
+\Psi^{\sigma}(t)\frac{\Psi^{V}}{\sigma^2(t)V}-\frac{\Psi_{x}^{V}}{V}+\frac{\Psi^{V}V_{x}}{V^2}\Bigg\}dW(t).\nonumber
\end{align}
Setting
\begin{align*}
\Psi^{U}&=-\frac{\Psi^{\Psi^{V}}}{\sigma(t)V}+\frac{(\Psi^{V})^2}{\sigma(t)V^2}
+\Psi^{\sigma}(t)\frac{\Psi^{V}}{\sigma^2(t)V}-\frac{\Psi_{x}^{V}}{V}+\frac{\Psi^{V}V_{x}}{V^2},\\
\end{align*}
in conjunction with (\ref{eqn:SCH}), it is straightforward to obtain that

\begin{align*}
\Psi_{x}^{U}&=-\frac{\Psi_{x}^{\Psi^{V}}}{\sigma(t)V}+\frac{\Psi^{\Psi^{V}}V_{x}}{\sigma(t)V^2}+2\frac{\Psi^{V}\Psi_{x}^{V}}{\sigma(t)V^2}
-2\frac{(\Psi^{V})^2V_{x}}{\sigma(t)V^3}+\Psi^{\sigma}(t)\frac{\Psi_{x}^{V}}{\sigma^2(t)V}\\
\\
&\quad\,-\Psi^{\sigma}(t)\frac{\Psi^{V}V_{x}}{\sigma^2(t)V^2}
-\frac{\Psi_{xx}^{V}}{V} +2\frac{\Psi_{x}^{V}V_{x}}{V^2}+\frac{\Psi^{V}V_{xx}}{V^2}-2\frac{\Psi^{V}(V_{x})^2}{V^3},\\
\\
U_{x}&=-\frac{\Psi_{x}^{V}}{\sigma(t)V}+\frac{\Psi^{V}V_{x}}{\sigma(t)V^2}-\frac{V_{xx}}{V}+\frac{(V_{x})^2}{V^2},
\end{align*}
\begin{align*}
UU_{x}&=\frac{\Psi^{V}\Psi_{x}^{V}}{\sigma^2(t)V^2}-\frac{(\Psi^{V})^2V_{x}}{\sigma^2(t)V^3}+\frac{\Psi^{V}V_{xx}}{\sigma(t)V^2}
+\frac{\Psi_{x}^{V}V_{x}}{\sigma(t)V^2}-2\frac{\Psi^{V}(V_{x})^2}{\sigma(t)V^3}\\ \\
&\quad\,+\frac{V_{xx}V_{x}}{V^2}-\frac{(V_{x})^3}{V^3},\\
\\
U\Psi^{U}&=\frac{\Psi^{V}\Psi^{\Psi^{V}}}{\sigma^2(t)V^2}-\frac{(\Psi^{V})^3}{\sigma^2(t)V^3}-\Psi^{\sigma}(t)\frac{(\Psi^{V})^2}{\sigma^3(t)V^2}
+\frac{\Psi^{V}\Psi_{x}^{V}}{\sigma(t)V^2}+\frac{\Psi^{\Psi^{V}}V_{x}}{\sigma(t)V^2}\\ \\
&\quad\,-2\frac{(\Psi^{V})^2V_{x}}{\sigma(t)V^3}+\Psi^{\sigma}(t)\frac{\Psi^{V}V_{x}}{\sigma^2(t)V^2}
+\frac{\Psi_{x}^{V}V_{x}}{V^2}-\frac{(V_{x})^2\Psi^{V}}{V^3},\\
\\
U_{xx}&=-\frac{\Psi_{xx}^{V}}{\sigma(t)V}
+\frac{\Psi^{V}V_{xx}}{\sigma(t)V^2}-\frac{V_{xxx}}{V}+3\frac{V_{xx}V_{x}}{V^2}-2\frac{(V_{x})^3}{V^3}\nonumber\\
\\
&\quad\,+2\frac{\Psi_{x}^{V}V_{x}}{\sigma(t)V^2}
-2\frac{(V_{x})^2\Psi^{V}}{\sigma(t)V^3}.
\end{align*}
Therefore, recalling (\ref{def:coeffd}) and (\ref{def:ec}), direct substitution of the above into (\ref{eqn:SBURGERSredmor}) leads to (\ref{eqn:VerU}) and completes the proof.
\end{proof}
\smallskip

Theorem \ref{thm:SCH} necessitates the investigation of existence, uniqueness, and positivity of solutions for the linear BSPDE (\ref{eqn:IKWG}) subject to the constraints (\ref{eqn:midconstraint}), which is another linear BSPDE for their martingale part,  and (\ref{eqn:relatpandq}). Ma \& Yong \big(\cite{Ma-Yong97}, \cite{Ma-Yong99}\big) studied linear BSPDEs of parabolic type
and provided regularity conditions on their coefficients that ensured existence, uniqueness, and comparison results between solutions. However, their results are not applicable in the case considered here on account of the  presence of the BSPDE constraint.

\begin{proposition}\label{cor:solvableburg}
Let Assumption \ref{as:semisigmag} hold.

 (i) If $V \in C_{\mathbb{F}}\big([0,T];\mathbb{L}^2(\Omega;C^4(\mathbb{R}))\big)$ is a positive random field that satisfies the BSPDE
\begin{align}\label{eqn:stocheatcase1}
dV&=\left[-\frac{1}{2}\,\sigma^2(t) V_{xx}+\sigma(t)m(t,x)V_{x}+c(t,x)V\right]dt,\quad 0\leq t<T,\ x\in \mathbb{R},\\
V(T,x)&=q(x),\qquad x\in \mathbb{R}\nonumber,
\end{align}
where the random field $c$ is given by (\ref{def:ec}) and $q$ is defined as in (\ref{eqn:BSPDESH}),
then the random field $$U=-\frac{\partial}{\partial x} \ln V \in C_{\mathbb{F}}\big([0,T];\mathbb{L}^2(\Omega;C^3(\mathbb{R}))\big),$$
defined through the Cole-Hopf transformation, solves the Burgers-type BSPDE
\begin{align*}
 dU&=\bigg[-\frac{1}{2}\;    \sigma^2(t)U_{xx}+\sigma^{2}(t)UU_{x}+\sigma(t)m(t,x)U_{x}+\sigma(t)m_{x}(t,x)U\nonumber\\
 &\quad\,\,\,\,\,+f(t,x)\bigg]dt,\ 0\leq t<T,\ x\in \mathbb{R},\\
U(T,x)&=p(x),\qquad x\in \mathbb{R}.
 \end{align*}
  (ii) For every $\lambda>-\frac{1}{2}$ with $\lambda\neq 0,$  if $V \in C_{\mathbb{F}}\big([0,T];\mathbb{L}^2(\Omega;C^4(\mathbb{R}))\big)$ is a positive random field that satisfies the BSPDE
\begin{align}\label{eqn:stocheatcase2}
dV&=\Bigg[-\left(\frac{ 1}{2}+\lambda\right)\,\sigma^2(t) V_{xx}+\big(1+\lambda\big)\sigma(t)m(t)V_{x}+c(t)V\Bigg]dt\nonumber\\ 
&\quad+\lambda \sigma(t)V_{x}dW(t),\quad 0\leq t<T,\ x\in \mathbb{R}, \nonumber\\\\
V(T,x)&=q(x),\qquad x\in \mathbb{R},\nonumber\\ \nonumber
\end{align}
where

 $$q(x)\triangleq e^{-\frac{1}{1+\lambda}\int p(x)dx},$$
then the random field $$U=-(1+\lambda)\frac{\partial}{\partial x} \ln V \in C_{\mathbb{F}}\big([0,T];\mathbb{L}^2(\Omega;C^3(\mathbb{R}))\big),$$
defined through the generalized Cole-Hopf transformation of (\ref{eqn:SCH}), solves the Burgers-type BSPDE
\begin{align*}
 dU&=\Bigg[-\left(\frac{1}{2}+\lambda\right)\,\sigma^2(t)U_{xx}+(1+\lambda)\sigma^{2}(t)UU_{x}+\big(1+\lambda\big)\sigma(t)m(t)U_{x}\Bigg]dt\nonumber\\
 &\quad\,+\lambda \sigma(t) U_{x}dW(t),\ 0\leq t<T,\ x\in \mathbb{R},\\\\
U(T,x)&=p(x),\qquad x\in \mathbb{R}.
 \end{align*}
\end{proposition}

\begin{proof}
Both statements of the proposition follow as a direct application of Theorem \ref{thm:SCH} to the random field pairs
$(V,0)$ and $(V,\lambda\sigma(\cdot)V_{x})$, respectively. Specifically, in statement (i)
the constraint (\ref{eqn:midconstraint}) holds trivially. On the other hand, in statement (ii) the random fields $m$ and $c$
are now stochastic processes, i.e. $f=0$ in (\ref{def:ec}), and since $V$ is a solution of
BSPDE (\ref{eqn:stocheatcase2}) then it is straightforward to see by employing the product rule that the random field $\Psi^{V}=\lambda\sigma(\cdot)V_{x}$ satisfies
the constraint (\ref{eqn:midconstraint}).
\end{proof}

\begin{remark}
Regarding the solvability of the BSPDEs (\ref{eqn:stocheatcase1}) and (\ref{eqn:stocheatcase2}), we may assume that the coefficients
 $\sigma,\ m,\ c$ are measurable with respect to the two-sided Wiener process (cf. \cite{Pardoux-Peng}). Then one can invert time
 and arrive in each case at a linear parabolic FSBDE of the form of (\ref{eqn:BSPDESH}), which is solvable according to Theorem
 \ref{thm:shdhsc}.
\end{remark}

\subsection{Examples of completely solvable backward random Burgers equations}\label{subsec:EXAMPLESCONSTRAINTS}

 In what follows, we present examples of random field pairs $(V,\Psi^{V})$ that satisfy the hypotheses of Theorem \ref{thm:SCH}
for \emph{appropriate} coefficients.

\begin{example}\label{ex:SolBSPDEwithC}{\rm
 We postulate positive solutions for (\ref{eqn:IKWG}) of the form

\begin{equation}\label{def:solV1}
V(t,x)\triangleq e^{f_{1}(x)W(t)}
\end{equation}
on $[0,T]\times\mathbb{R}$, in terms of a given function $f_{1}:\mathbb{R}\rightarrow\mathbb{R}$ of class $C(\mathbb{R}^4)$.
From It\^{o}'s formula we have
\begin{equation}\label{eqn:ItoV1}
dV=f_{1}(x)VdW(t)+\frac{1}{2}f_{1}^2(x)Vdt,
\end{equation}
which shows that

\begin{equation}\label{eqn:solMPV1}
\Psi^{V}=f_{1}(x)V.
\end{equation}
Then, the terminal condition of (\ref{eqn:IKWG}) indicates that
the random field $q$ is determined by

\begin{equation}\label{def:termqV1}
q(x)= e^{f_{1}(x)W(T)}, \qquad x\in\mathbb{R},
\end{equation}
thus the random field $p$ of (\ref{eqn:SBURGERSredmor}) must be chosen according to (\ref{eqn:relatpandq}).
Using (\ref{def:solV1}) and (\ref{eqn:solMPV1}), we derive that
\begin{align*}
V_{x}&=f_{1}'(x)VW(t),\\
V_{xx}&=f_{1}''(x)VW(t)+\big(f_{1}'(x)\big)^2VW^2(t),\\
\Psi_{x}^{V}&=f_{1}'(x)V+f_{1}(x)f_{1}'(x)VW(t),\\
\Psi_{xx}^{V}&=f_{1}''(x)V+2\big(f_{1}'(x)\big)^2VW(t)+f_{1}(x)f_{1}''(x)VW(t)\nonumber\\
&\,\,\,\,\,+f_{1}(x)\big(f_{1}'(x)\big)^2VW^2(t),
\end{align*}
and substituting them into (\ref{eqn:IKWG}), in comparison with (\ref{eqn:ItoV1}), we get
\begin{align}\label{eqn:Coeff1V1}
\frac{1}{2}f_{1}^2(x)=&-\frac{1}{2}\,\sigma^2(t)\Big[f_{1}''(x)+(f_{1}'(x))^2W(t)\Big]W(t)+\sigma(t)m(t,x)f_{1}'(x)W(t)\\
&+m(t,x)f_{1}(x)-\sigma(t)f_{1}'(x)[1+f_{1}(x)W(t)]+c(t,x).\nonumber
\end{align}
Furthermore, It\^{o}'s formula and (\ref{eqn:solMPV1}) give

\begin{equation}\label{eqn:ItoMPV1}
d\Psi^{V}=f_{1}^2(x)VdW(t)+\frac{1}{2}f_{1}^3(x)Vdt,
\end{equation}
which implies that

\begin{align*}
\Psi^{\Psi^{V}}&=f_{1}^2(x)V,\\
\Psi_{x}^{\Psi^{V}}&=2f_{1}(x)f_{1}'(x)V+f_{1}^2(x)f_{1}'(x)VW(t);
\end{align*}
then, substitute these equations into (\ref{eqn:midconstraint}) and compare it with (\ref{eqn:ItoMPV1}) to get

\begin{align}\label{eqn:Coeff2V1}
\quad\quad\frac{1}{2}f_{1}^3(x)=&-\frac{1}{2}\,\sigma^2(t)\Big[f_{1}''(x)+2(f_{1}'(x))^2W(t)\nonumber\\
&\qquad\qquad\qquad\quad\, +f_{1}(x)f_{1}''(x)W(t)+f_{1}(x)(f_{1}'(x))^2W^2(t)\Big]\nonumber\\
&+\big(\sigma(t)m(t,x)+\Psi^{\sigma}(t)\big)f_{1}'(x)\big[1+f_{1}(x)W(t)\big]\\
&+\left(\frac{A^{\sigma}(t)}{\sigma(t)}-\frac{\Psi^{\sigma}(t)}{\sigma(t)}m(t,x)+c(t,x)-\frac{(\Psi^{\sigma}(t))^2}{\sigma^2(t)}\right)f_{1}(x)\nonumber\\
&-\sigma(t)f_{1}(x)f_{1}'(x)\big[2+f_{1}(x)W(t)\big]+\bigg(\frac{\Psi^{\sigma}(t)}{\sigma(t)}+m(t,x)\bigg)f_{1}^2(x).\nonumber
\end{align}

Obviously the random field triplet  $(V,\Psi^{V},\Psi^{\Psi^{V}})$ belongs to the class $ C_{\mathbb{F}}\big([0,T];\mathbb{L}^2(\Omega;C^4(\mathbb{R}))\big)\times C_{\mathbb{F}}\big([0,T];\mathbb{L}^2(\Omega;C^3(\mathbb{R}))\big) \times \mathbb{L}_{\mathbb{F}}^2\big(0,T;C^2(\mathbb{R})\big),$ according to Theorem  \ref{thm:SCH}. Furthermore, equations (\ref{eqn:Coeff1V1}) and (\ref{eqn:Coeff2V1}) form a $2\times2$ linear system which can be easily solved for the random fields $m$ and $c$. A selection of these coefficients together with (\ref{def:termqV1}) constitute the pair $(V,\Psi^{V})$ of (\ref{def:solV1}) and (\ref{eqn:solMPV1}) as a positive solution of (\ref{eqn:IKWG}) subject to the constraint (\ref{eqn:midconstraint}).
}
\end{example}

\begin{example}\label{ex:SolBSPDEwithC2}{\rm
 Alternatively, for a given positive function $f_{2}:\mathbb{R}\rightarrow\mathbb{R}^{+}$ of class $C(\mathbb{R}^4)$, we consider that (\ref{eqn:IKWG}) admits a solution of the form
\begin{equation}\label{def:solV2}
V(t,x)\triangleq f_{2}(x)e^{W(t)} \in  C_{\mathbb{F}}\big([0,T];\mathbb{L}^2(\Omega;C^4(\mathbb{R}))\big).
\end{equation}
 Now, It\^{o}'s rule yields
\begin{equation}\label{eqn:ItoV2}
dV=VdW(t)+\frac{1}{2}\,Vdt\qquad \text{and} \qquad \Psi^{V}=V,
\end{equation}
the terminal condition of (\ref{eqn:IKWG}) imposes that
the random field $q$ must be given by
\begin{equation}\label{def:termqV2}
q(x)= f_{2}(x)e^{W(T)}, \qquad x\in\mathbb{R},
\end{equation}
and the constraint (\ref{eqn:relatpandq}) designates the random field $p$ of (\ref{eqn:SBURGERSredmor}) that should be considered.
Therefore, differentiating (\ref{def:solV2}) we obtain
\begin{align*}
V_{x}=f_{2}'(x)e^{W(t)}\qquad \text{and}\qquad
V_{xx}=f_{2}''(x)e^{W(t)},
\end{align*}
and through substitution back to (\ref{eqn:IKWG}) and (\ref{eqn:midconstraint}), a comparison with (\ref{eqn:ItoV2}) reveals that

\begin{align}
\frac{1}{2}f_{2}(x)=&-\frac{1}{2}\,\sigma^2(t)f_{2}''(x)+\sigma(t)m(t,x)f_{2}'(x)+m(t,x)f_{2}(x)\nonumber\\
&-\sigma(t)f_{2}'(x)+c(t,x)f_{2}(x),\nonumber\\
\frac{1}{2}f_{2}(x)=&-\frac{1}{2}\sigma^2(t)f_{2}''(x)
+\Big(\sigma(t)m(t,x)+\Psi^{\sigma}(t)\Big)f_{2}'(x)\nonumber\\
&+\left(\frac{A^{\sigma}(t)}{\sigma(t)}-\frac{\Psi^{\sigma}(t)}{\sigma(t)}m(t,x)
+c(t,x)-\frac{(\Psi^{\sigma}(t))^2}{\sigma^2(t)}\right)f_{2}(x)
\nonumber\\
&-\sigma(t)f_{2}'(x)+\left(\frac{\Psi^{\sigma}(t)}{\sigma(t)}+m(t,x)\right)f_{2}(x)\nonumber
\end{align}
should hold respectively. Similarly to (i), the solution of this $2\times2$ linear system determines the proper random fields $m$ and $c$ for which
the pair $(V,\Psi^{V})$ of (\ref{def:solV2}) and (\ref{eqn:ItoV2}) is a positive solution of (\ref{eqn:IKWG}) subject to the constraint (\ref{eqn:midconstraint}).}
\end{example}

\section{Stochastic Feynman-Kac Formula}\label{sec:SFKF}
In this section we establish \emph{stochastic} Feynman-Kac results for the solutions of the Burgers FSPDE (\ref{eqn:forwardSBURGERSredmore}) and BSPDE (\ref{eqn:SBURGERSredmor});
 cf. Propositions  \ref{prop:FFKU} and \ref{prop:BFKU} below, respectively. Both results rely on the Cole-Hopf transformation, in its simple or generalized version (cf. (\ref{eqn:SCH}))
 respectively, which reduces the initial SPDEs to linear ones.
\begin{proposition}\label{prop:FFKU}
 Let the assumptions of Theorem \ref{thm:shdhsc} hold, let the processes $\sigma$, $\bar{c}$ and the random fields $b, \, m$ be constants, let
 $f(\,\cdot\,,\cdot)=0$, as well as $p\in C^{2}(\mathbb{R})$. If, in addition, there exist constants $K>0$ and $0<\alpha<1/2T$ such that
 $$\max_{0\leq t \leq T}|G(t,x)|+\max_{0\leq t \leq T}|G_{x}(t,x)|\leq K e^{\alpha x^{2}} \qquad \forall\ x\in \mathbb{R}, $$
 then the solution  random field $U$ of the corresponding Burgers FSPDE (\ref{eqn:forwardSBURGERSredmore})
 admits for every $(t,x)\in[0,T]\times\mathbb{R}$ and a.s. the stochastic representation
 \begin{equation*}
 U(t,x)=-\frac{G_{x}\big(t,x+H(t)\big)}{G\big(t,x+H(t)\big)}\, ,
 \end{equation*}
 where
 \begin{equation*}
 G(t,x)=e^{-\bar{c}\,t}E^{x}\Big[q\big(x(t)\big)\Big] \qquad \text{and}\qquad
 G_{x}(t,x)=e^{-\bar{c}\,t}E^{x}\Big[q_{x}\big(x(t)\big)\Big];
 \end{equation*}
 here, the notation $E^{x}$ stands for the expectation that corresponds to the process
\begin{align*}
dx(t)&=k\,dt+\sigma \, dW(t), \qquad 0<t\leq T, \\
x(0)&=x.
\end{align*}
\end{proposition}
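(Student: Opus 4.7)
The plan is to invoke Theorem \ref{thm:shdhsc} to transfer the problem onto the auxiliary random field $G$ solving (\ref{eqn:BSPDEG}), and then to exploit the simplifications forced by the hypotheses (constant $\sigma,\bar c,b,m$ and $f\equiv 0$) so that (\ref{eqn:BSPDEG}) collapses to a deterministic linear parabolic PDE of standard Kolmogorov type. Specifically, with these choices we have $k(t,x)\equiv k$ and $c(t,x)\equiv \bar c$ constants, hence the space‑translation by $H(t)$ in $k(t,x-H(t))$ and $c(t,x-H(t))$ is invisible, and (\ref{eqn:BSPDEG}) reduces to
\begin{equation*}
G_{t}=\tfrac12\sigma^{2}G_{xx}+k\,G_{x}-\bar c\,G,\qquad G(0,x)=q(x).
\end{equation*}

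The second step is to recognize the above as the Kolmogorov forward-time PDE associated with the drift-diffusion $dx(t)=k\,dt+\sigma\,dW(t)$ and the discount rate $\bar c$. Applying the classical Feynman--Kac formula yields the representation $G(t,x)=e^{-\bar c t}E^{x}[q(x(t))]$. Differentiating formally with respect to $x$ and using the fact that $q\in C^{1}(\mathbb{R})$ (which follows from $p\in C^{2}(\mathbb{R})$ and $q=e^{-\int p(x)dx}$) gives $G_{x}(t,x)=e^{-\bar c t}E^{x}[q_{x}(x(t))]$. To justify the differentiation under the expectation, one couples the dominated convergence theorem with the Gaussian tail of $x(t)$ and the sub‑Gaussian growth bound $\max_{t\leq T}(|G|+|G_{x}|)\leq Ke^{\alpha x^{2}}$ with $\alpha<1/(2T)$; this same growth control also secures uniqueness of the Feynman--Kac solution within the admissible class.

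The final step is purely algebraic: Theorem \ref{thm:shdhsc} delivers the Cole--Hopf relation $U=-V_{x}/V$ together with the factorization $V(t,x)=G(t,x+H(t))$, so that $V_{x}(t,x)=G_{x}(t,x+H(t))$, and therefore
\begin{equation*}
U(t,x)=-\frac{V_{x}(t,x)}{V(t,x)}=-\frac{G_{x}(t,x+H(t))}{G(t,x+H(t))}\qquad\text{a.s.},
\end{equation*}
which is the claimed identity once the expressions for $G$ and $G_{x}$ are substituted.

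I expect the main obstacle to be the rigorous justification of passing the spatial derivative inside the expectation (equivalently, of the Feynman--Kac representation for $G_{x}$) together with the verification that the solution produced this way lies in the functional class $C_{\mathbb F}([0,T];\mathbb L^{2}(\Omega;C^{3}(\mathbb R;\mathbb R^{+})))$ required by Theorem \ref{thm:shdhsc}; both are driven by the exponential-quadratic bound assumed on $G$ and $G_{x}$ together with the Gaussian moments of $x(t)$, and the positivity of $G$ transfers from that of $q=e^{-\int p}$ by the probabilistic representation.
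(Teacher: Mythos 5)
Your proof is correct and follows essentially the same route as the paper: reduce (\ref{eqn:BSPDEG}) under the constant-coefficient hypotheses to a deterministic Kolmogorov equation $G_t=\tfrac12\sigma^2G_{xx}+kG_x-\bar c\,G$, invoke the classical Feynman--Kac representation/uniqueness result (Karatzas--Shreve, Corollary 4.4.5) under the $Ke^{\alpha x^2}$ growth bound, and conclude via the Cole--Hopf relation $U=-V_x/V$ together with $V(t,x)=G\big(t,x+H(t)\big)$. The only immaterial difference is that the paper obtains the formula for $G_x$ by noting that $G_x$ solves the same FSPDE with initial datum $q_x$ and applying Feynman--Kac to it directly (the growth hypothesis on $G_x$ being there precisely for this), whereas you differentiate under the expectation sign; both are justified by the same sub-Gaussian bound and Gaussian integrability.
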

\begin{proof}
According to hypothesis, differentiation of (\ref{eqn:BSPDEG}) implies that the random field $G_{x}$ solves the same FSPDE with $G$ but with
initial condition given by $G_{x}(0,x)=q_{x}(x)$ for $x\in \mathbb{R}.$ Then the conclusion of the proposition is obtained by the Cole-Hopf transformation, (\ref{def:SOLV}), and standard
Feynman-Kac results (e.g. \cite{Karatzas-Shreve} Corollary 4.4.5) applied to the FSPDEs whose solutions are the random fields $G$ and $G_{x}$.
\end{proof}
\begin{proposition}\label{prop:BFKU}
Consider the FBSDEs of (\ref{eqn:FBSDESH}) on the interval $[t,T]$ with $x(t)=x$ for some $0\leq t \leq T,$ and assume that the hypotheses of Theorem \ref{thm:SCH} hold
 with $m(\,\cdot\, ,\cdot)=d(\,\cdot\,,\cdot)=\Psi^{\sigma}(\cdot)=0$.
 Then the solution  $(U,\Psi^{U})$ of the Burgers BSPDE (\ref{eqn:SBURGERSredmor}), corresponding to (\ref{eqn:SCH}), obtains for all $(t,x)\in[0,T]\times\mathbb{R}$ and a.s. the representation
 \begin{equation*}
 U(t,x)=-\frac{z^{x}(t)}{\sigma(t)y^{x}(t)}\, ,
 \end{equation*}
 where
 \begin{align*}
 z^{x}(t)&=E^{x}\Bigg[-e^{-\int_{t}^{T}\left(\frac{A^{\sigma}(s)}{\sigma(s)}+c\big(s,x(s)\big)\right)ds}
 \sigma(T)p(x\big(T)\big)q\big(x(T)\big)\\
 & \qquad \quad\,  +\int_{t}^{T}e^{-\int_{t}^{s}\left(\frac{A^{\sigma}(\theta)}{\sigma(\theta)}+c\big(\theta,x(\theta)\big)\right)d\theta}
 \sigma(s)f\big(s,x(s)\big)y^{x}(s)ds\Bigg{|}\mathcal{F}(t)\Bigg],\\
 y^{x}(t)&=E^{x}\left[q\big(x(T)\big)e^{-\int_{t}^{T}c(s,x(s))ds}\Big{|}\mathcal{F}(t)\right];
 \end{align*}
 here, the process $\sigma(\cdot)$ satisfies Assumption \ref{as:semisigmag}, the random field $c$ is given by (\ref{def:ec}), and we denote by $E^{x}$ the expectation that corresponds to the forward process of the aforementioned FBSDE system.
\end{proposition}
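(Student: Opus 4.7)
The plan is to combine the probabilistic realisation of the generalised Cole--Hopf transformation with two linear Feynman--Kac steps applied to the heat-type FBSDE. Under the hypotheses $m(\cdot,\cdot)=d(\cdot,\cdot)=\Psi^{\sigma}(\cdot)=0$, relation (\ref{def:ec}) forces $e(t,x)=\sigma(t) m_{x}(t,x)=0$, so the heat FBSDE (\ref{eqn:FBSDESH}) reduces to $dy(s)=c(s,x(s))y(s)\,ds+z(s)\,dW(s)$ with $y(T)=q(x(T))$, while the Burgers FBSDE (\ref{eqn:FBSDESBred}) reduces to $dY(s)=[\sigma(s)Y(s)Z(s)+f(s,X(s))]\,ds+Z(s)\,dW(s)$ with $Y(T)=p(X(T))$. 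By (\ref{eqn:relatUandV}) the two triplets share the forward process ($X=x$) and are linked by $Y(s)=-z(s)/(\sigma(s)y(s))$. Evaluating this at $s=t$ with $x(t)=X(t)=x$ yields the announced identity $U(t,x)=-z^{x}(t)/(\sigma(t)y^{x}(t))$, so the problem reduces to obtaining conditional-expectation formulas for $y^{x}(t)$ and $z^{x}(t)$ separately.

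For $y^{x}(t)$, Itô's product rule shows that $s\mapsto y(s)\exp\!\bigl(-\int_{t}^{s}c(\theta,x(\theta))\,d\theta\bigr)$ is a local martingale on $[t,T]$ (its finite-variation part vanishes identically). The $C_{\mathbb{F}}\!\bigl([0,T];\mathbb{L}^{2}\bigr)$-type regularity built into the hypotheses of Theorem \ref{thm:SCH} promotes it to a true martingale, and taking conditional expectations with $y(T)=q(x(T))$ produces the stated formula directly.

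The key step for $z^{x}(t)$ is to derive a closed linear SDE for $z$ itself. Applying Itô's product rule to $\sigma(s)y(s)Y(s)=-z(s)$, using $\Psi^{\sigma}=0$, the drift reads
\[
\sigma y f+(\sigma c+A^{\sigma})yY+\sigma^{2}yYZ+\sigma zZ.
\]
Here the algebraic identity $\sigma y Y=-z$ forces $\sigma^{2}yYZ=-\sigma zZ$, so the last two (quadratic-covariation and product-of-diffusions) terms cancel exactly, and the middle term collapses to $-(c+A^{\sigma}/\sigma)z$. One therefore obtains a linear dynamics
\[
dz(s)=\bigl[(c(s,x(s))+A^{\sigma}(s)/\sigma(s))\,z(s)-\sigma(s)y(s)f(s,x(s))\bigr]\,ds+\mathrm{d}(\text{mart.}).
\]
Multiplying by the integrating factor $\exp\!\bigl(-\int_{t}^{s}(A^{\sigma}/\sigma+c)\,d\theta\bigr)$ eliminates the $z$-linear drift and exhibits the product as a local martingale minus $\exp(\cdots)\sigma y f\,ds$. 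A conditional expectation gives the formula after substituting the terminal value $z(T)=\sigma(T)q_{x}(x(T))+\Psi^{V}(T,x(T))=-\sigma(T)p(x(T))q(x(T))$, which is precisely the rewriting of (\ref{eqn:relatpandq}).

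The main obstacle is verifying that the local martingales arising from the integrating-factor manipulation in each step are in fact true martingales; this is where the $\mathbb{L}^{2}$-integrability of $\Psi^{V}$ and $\Psi^{U}$ and the square-integrability of $\sigma$ enter decisively. Beyond that, the algebraic cancellation $\sigma^{2}yYZ+\sigma zZ=0$ in the $z$-step is the structural heart of the proof: it is precisely this cancellation that renders the otherwise nonlinear $z$-dynamics linear, and thereby Feynman--Kac representable.
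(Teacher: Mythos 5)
Your proof is correct and follows the same overall architecture as the paper's: reduce to the identity $U(t,x)=-z^{x}(t)/(\sigma(t)y^{x}(t))$ via (\ref{eqn:relatUandV}), exhibit linear BSDEs for $y$ and $z$ whose drifts do not involve their own martingale integrands, pin down the terminal condition $z(T)=-\sigma(T)p(x(T))q(x(T))$ from (\ref{eqn:relatpandq}), and then represent the solutions of the two linear BSDEs by conditional expectations. The one genuine divergence is how the linear dynamics of $z$ are obtained. The paper specializes the previously computed semimartingale decomposition (\ref{eqn:SMDz}) of $z$ and then uses the constraint (\ref{eqn:midconstraint}) to kill the residual terms $\tfrac{1}{2}\sigma^{2}\Psi^{V}_{xx}-(A^{\sigma}/\sigma+c)\Psi^{V}+A^{\Psi^{V}}+\sigma\Psi^{\Psi^{V}}_{x}$, leaving the drift $(A^{\sigma}/\sigma+c)z+\sigma c_{x}y=(A^{\sigma}/\sigma+c)z-\sigma f y$ (using $c_{x}=-f$ from (\ref{def:ec})). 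You instead apply the product rule to $z=-\sigma yY$, using the reduced dynamics of $Y$ and $y$ and the cancellation $\sigma^{2}yYZ+\sigma zZ=0$; this is algebraically equivalent but shorter and self-contained, at the price of invoking the pathwise identity $\sigma yY=-z$ (i.e., the conclusion of Theorem \ref{thm:SCH}) as an input, which is legitimate here since the proposition assumes that theorem's hypotheses. Finally, where the paper cites Corollary 6.2 of Ma \& Yong (1997) for the representation of linear BSDEs, you carry out the integrating-factor computation by hand; your acknowledgement that the local-martingale-to-martingale step rests on the $\mathbb{L}^{2}$ regularity assumptions is at the same level of detail as the paper's citation.
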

\begin{proof}
 Thanks to hypothesis and the constraint (\ref{eqn:midconstraint}), the BSDEs of (\ref{eqn:FBSDESH}) and  (\ref{eqn:SMDz}) simplify to
\begin{align*}
dy(s)&=c\big(s,x(s)\big)y(s)ds+z(s)dW(s),\quad t\leq s<T, \qquad \text{and}\\
dz(s)&=\bigg[\left(\frac{A^{\sigma}(s)}{\sigma(s)}+c\big(s,x(s)\big) \right)z(s)-\sigma(s)f\big(s,x(s)\big)y(s)\bigg]ds+h(s)dW(s),
\end{align*}
for $t\leq s<T,$ respectively. Denoting their solutions by
$y^{x}(\cdot)$ and $z^{x}(\cdot)$, observe that these BSDEs are linear with drift terms independent
 of their auxiliary processes $z(\cdot)$ and $h(\cdot)$, respectively.
Additionally, set $t=T$ in (\ref{eqn:relatUandV}) and invoke the terminal conditions of (\ref{eqn:SBURGERSredmor}) and (\ref{eqn:FBSDESH}) to verify that the terminal condition of the last BSDE above is given by
\begin{equation}\label{eqn:terminalz}
z(T)=-\sigma(T)p(x\big(T)\big)q\big(x(T)\big).
 \end{equation}
Therefore, the assertion of the proposition follows as an application of Corollary 6.2 by Ma \& Yong (1997), in combination with (\ref{eqn:relatUandV}), the above BSDEs, and their terminal conditions in (\ref{eqn:FBSDESH}) and (\ref{eqn:terminalz}).
\end{proof}

\section{Applications} \label{sec:AFB}

The following applications illustrate the use of Theorem  \ref{thm:SCH} to construct solutions of  backward stochastic Burgers equations with random coefficients, which are less commonly found in the literature. These equations turn out to be associated with problems on controllability or mathematical finance.

\subsection{Controllability of a backward random Burgers equation}
Consider the following control system
\begin{align}
\label{eqn:SBURGERSCONTR}
dU&=\Big[-\frac{1}{2} \ \sigma^2(t,x)U_{xx}+a(t,x)UU_{x}+g(t,x)U
\, U_{1} +b(t,x)U_{x}+e(t,x)U\\
& \qquad +s(t,x) U_{\,2} +m(t,x)U_{1} +f(t,x)\Big]dt+U_{1} \, dW(t), \  0\leq t < T,\ x\in \mathbb{R},\nonumber
\end{align}
where $(U_1,U_{\,2})$ is a pair of control processes  to be determined and $p_{0}(x)$ is the initial state of the system. The question we wish to answer is the following. Setting a desired final condition $p(x)$  for the control system (\ref{eqn:SBURGERSCONTR}), for which initial state $p_{0}(x)$ does there exist a pair of control processes $(U_{1},U_{\,2})$ such that this system can be driven in time $T$ to the desired state $p(x)$?

The linearization results for the backward Burgers equation may prove useful to answering this question. Assume that the two control procedures are connected by $U_{\,2}=\frac{\partial}{\partial x} U_{1}$. Then the control system is equivalent to the BSPDE (\ref{eqn:SBURGERS0}) with $U_{1}=\Psi^{U}$. Therefore, having obtained a solution $(U,\Psi^{U})$ to  problem (\ref{eqn:SBURGERS0}) through the analysis of Section \ref{sec:BRBE}, we immediately obtain the control procedures needed to drive the system to the desired final state as $U_{1}=\Psi^{U}$ and $U_{\,2}=\Psi^{U}_{x},$ and we characterize the initial state we need to start from by $p_{0}(x)=U(0,x)$.

\subsection{Pricing a contingent claim}
We consider a financial market on a finite time-horizon $[0,T]$
consisting of a money market
and a stock, whose prices $S_{0}(\cdot)$ and $S(\cdot)$ evolve according to the SDEs
\begin{align}\label{def:S0andS}
\qquad\qquad dS_{0}(t)&=r(t)S_{0}(t)dt,\qquad 0<t\leq T,\qquad S_{0}(0)=1, \quad \text{and}\phantom{\Big[}\\
dS(t)&=S(t)\big[\mu(t)dt+\sigma(t)dW(t)\big],\quad 0<t\leq T,\quad S(0)=s\geq 0,\nonumber
\end{align}
respectively. The interest rate $r(\cdot)\geq 0,$ the instantaneous rate of return  $\mu(\cdot),$
 and the volatility $\sigma(\cdot)$ are taken to be bounded,
  $ \mathbb{F}$-progressively measurable random processes with $\sigma(\cdot)\neq 0$.

  In this market, we consider an economic agent with an initial endowment $Y(0)$, who at any time $t\in[0,T]$ invests a proportion
$\pi(t)$ of his wealth $Y(t)$ in the stock, saves the remaining amount
$[1-\pi(t)]Y(t)$ in the money market, consumes with a predetermined rate $C(t)\geq 0$, and is obliged to pay taxes
with rate $L(t)\geq 0$. Furthermore, the tax regulation of the market mandates payments proportional
to the size of the wealth $Y(\cdot)$ with respect to the interest rate $r(\cdot)$, but also applies in favor of investing to the stock by offering tax alleviation analogous to the size of the investment $\pi(\cdot)$ and to the undertaken risk captured by the risky asset's volatility $\sigma(\cdot);$ in particular, the tax rate is given by
$$ L(t)=-\pi(t)\sigma^2(t) Y^2(t)+r(t)Y(t).$$
Therefore, in accordance with the market dynamics of (\ref{def:S0andS}),  the \emph{wealth process} $Y(\cdot)\equiv
  Y^{\pi,C}(\cdot),$
corresponding to the portfolio-consumption pair $(\pi,C),$
is the solution of the following SDE
\begin{align}\label{eqn:SDEwealthY}
\qquad\qquad dY(t) & =\big(1-\pi(t)\big)r(t)Y(t)dt+\pi(t)Y(t)\Big[\mu(t)dt+\sigma(t)dW(t)\Big]\nonumber    \\
&\quad\ -C(t)dt-L(t)dt\nonumber    \\
& =\bigg\{\pi(t)\big(\mu(t)-r(t)\big)Y(t)+\pi(t)\sigma^2(t) Y^2(t)-C(t)\bigg\}dt\nonumber    \\
&\quad\ +\pi(t)\sigma(t)Y(t)dW(t).
\end{align}
Of course, all the preceding decisions taken by either the agent or the government should depend on the information available up to $t$ and not anticipate the future, thus
the \emph{portfolio strategy}
$\pi: [0,T]\times \Omega \rightarrow
\mathbb{R}$ and the \emph{consumption strategy}
 $C : [0,T]\times \Omega \rightarrow [0,\infty)$ are assumed to
 be $\mathbb{F}$-progressively measurable processes, and in addition
 verify the technical integrability condition
 $\int_{0}^{T}\big{(}C(t)+ \pi^{2}(t)\big{)}dt<\infty $, almost surely.

Let us now broach in this market the issue of pricing a contingent claim which depends on the volatility of the stock; that is, an $\mathcal{F}(T)$-measurable random
variable $\xi\geq 0$, satisfying proper integrability conditions, of the form $\xi=p\big(X(T)\big)$, where $X(\cdot)$ is the forward process of (\ref{eqn:FBSDESBred}) and $p:\mathbb{R}\times \Omega\rightarrow [0,\infty)$ is an $\mathcal{F}_{T}$-measurable random
field. This random amount $\xi$ represents a liability for its seller that has to be covered
with the smallest amount $Y(0)$ of initial funds at time $t=0$ and the right trading strategy $\pi(\cdot)$ during the interval $[0,T],$ so that
the corresponding wealth process  $Y(t)$ is positive
for all $t\in[0,T]$ and at the end of the time-horizon $Y(T)= p\big(X(T)\big)$ holds without risk.
In other words, to hedge and determine the fair price $Y(0)$ of the contingent claim at time $t=0$ it suffices to find the solution pair $(Y,\pi)$
of the BSDE (\ref{eqn:SDEwealthY}) subject to the previous terminal condition such that $Y(\cdot)>0$.

Setting
  \begin{equation}\label{def:exZ}
  Z(t)\triangleq\pi(t)\sigma(t)Y(t),\qquad 0\leq t< T,
  \end{equation}
  and eliminating the portfolio $\pi(\cdot)$ in the drift term of (\ref{eqn:SDEwealthY}), we get the BSDE
\begin{align*}
dY(t)&=\Big[\sigma(t)Z(t)Y(t)+m(t)Z(t)-C(t)\Big]dt+Z(t)dW(t),\qquad 0\leq t< T,\\
Y(T)&= p\big(X(T)\big),
\end{align*}
where
$$m(\cdot)\triangleq\frac{\mu(\cdot)-r(\cdot)}{\sigma(\cdot)}$$
is the well known relative market of risk process.
Due to Proposition \ref{prop:BSPDEANDFBSDEU}, its solution is given by
the pair $(Y,Z)$ of (\ref{def:triplBSB}), where $(U,\Psi^{U})$ is of class $ C_{\mathbb{F}}\big([0,T];\mathbb{L}^2(\Omega;C^3(\mathbb{R}))\big)\times \mathbb{L}_{\mathbb{F}}^2\big(0,T;C^2(\mathbb{R})\big)$ and solves the BSPDE
\begin{align}\label{eqn:Upric}
 \quad dU&=\bigg[-\frac{1}{2}\;    \sigma^2(t)U_{xx}+\sigma^{2}(t)UU_{x}+\sigma(t)U\Psi^{U}+m(t)\sigma(t)U_{x}-\sigma(t)\Psi_{x}^{U}\nonumber\\
 &\ \ \, \ \ \ +m(t)\Psi^{U}-C(t)\bigg]dt+\Psi^{U}dW(t),\quad 0\leq t<T,\quad x\in \mathbb{R},\\
U(T,x)&=p(x),\qquad x\in \mathbb{R}.\nonumber
 \end{align}
 In turn, Theorem \ref{thm:SCH} provides the solution of this BSPDE via the generalized Cole-Hopf transformation (\ref{eqn:SCH}), in terms of a triplet  $(V,\Psi^{V},\Psi^{\Psi^{V}})\in C_{\mathbb{F}}\big([0,T];\mathbb{L}^2(\Omega;C^4(\mathbb{R}))\big)\times  C_{\mathbb{F}}\big([0,T];\mathbb{L}^2(\Omega;C^3(\mathbb{R}))\big)\times \mathbb{L}_{\mathbb{F}}^2\big(0,T;C^2(\mathbb{R})\big)$ that solves the linear BSPDE
 \begin{align*}
dV&=\left\{-\frac{1}{2}\,\sigma^2(t) V_{xx}+\sigma(t)m(t)V_{x}-\sigma(t)\Psi_{x}^{V}+m(t)\Psi^{V}+C(t)xV\right\}dt\nonumber\\
&\ \ \ \ \ \   +\Psi^{V}dW(t),\quad 0\leq t<T,\quad x\in \mathbb{R},\\
V(T,x)&=q(x),\qquad x\in \mathbb{R}\nonumber
\end{align*}
for $\bar{c}(\cdot)=0$, subject to $V(\, \cdot\, ,\cdot)>0,$
\begin{align*}
d\Psi^{V}=\bigg\{&-\frac{1}{2}\;\sigma^2(t)\Psi_{xx}^{V}
+\Big[\sigma(t)m(t)+\Psi^{\sigma}(t)\Big]\Psi_{x}^{V}\nonumber\\
&+\left(\frac{A^{\sigma}(t)}{\sigma(t)}-m(t)\frac{\Psi^{\sigma}(t)}{\sigma(t)}+C(t)x
-\frac{(\Psi^{\sigma}(t))^2}{\sigma^2(t)}\right)\Psi^{V}-\sigma(t)\Psi_{x}^{\Psi^{V}}\\
&+\left(\frac{\Psi^{\sigma}(t)}{\sigma(t)}+m(t)\right)\Psi^{\Psi^{V}}\bigg\}dt+\Psi^{\Psi^{V}}dW(t)\qquad \text{on}\qquad [0,T)\times\mathbb{R},\nonumber
\end{align*}
and (\ref{eqn:relatpandq}). Then, the hedging portfolio process $\pi(\cdot)$ follows immediately from (\ref{def:exZ}).

Finally, we shall illustrate simple and explicit solutions of the above system in the case of constant model coefficients $r$, $\mu$, $\sigma$, so as $m$, and zero consumption, i.e., $C(\cdot)\equiv 0$.
In light of Examples \ref{ex:SolBSPDEwithC} and \ref{ex:SolBSPDEwithC2}, we postulate strictly positive solutions of the form
\[ V(t,x)\triangleq e^{\alpha\, W(t)} \qquad\text{or}\qquad V(t,x)\triangleq \beta\, e^{ W(t)}\]
for given constants $\alpha$, $\beta>0$, and take $\sigma<0$ so that (\ref{eqn:SCH}) establishes positive solutions
for the BSPDE (\ref{eqn:Upric}), respectively. Then, in order for the corresponding $2\times 2$ systems in these examples to hold, it suffices to
select  the coefficients of the model such that
\[ m(\cdot)= \frac{1}{2}\,\alpha \qquad\text{or}\qquad  m(\cdot)= \frac{1}{2},\]
and from (\ref{eqn:relatpandq}) \[ p(\cdot)= -\frac{\alpha}{\sigma}>0 \qquad\text{or}\qquad p(\cdot)=-\frac{1}{\sigma}>0, \]
respectively.

\section{Acknowledgements}
We are grateful to Professor Ioannis Karatzas for his valuable comments. We wish also to thank an anonymous referee
for reading thoroughly and making constructive suggestions on the paper. This work was supported by Athens University of Economics and Business under the Support Program for Basic Research (PEVE) 2010-2011.


\begin{thebibliography}{99}

\bibitem{Albeverio}  \textsc{Albeverio, S.} and \textsc{Ferrari, B.} (2008). \textit{Some Methods of Infinite Dimensional Analysis
in Hydrodynamics: An Introduction, in SPDEs in Hydrodynamic:
Recent Progress and Prospects}. G. Da Prato and M. R\"ockner, Lecture Notes in Mathematics 1942, Springer.



\bibitem{Bec} \textsc{Bec, J.} and \textsc{Khanin, K.} (2007). Burgers turbulence. \textit{Phys. Rep.} \textbf{447} 1--66.

\bibitem{Bertini} \textsc{Bertini, L.} and \textsc{Giacomin, G.} (1997). Stochastic Burgers and KPZ equations from particle
systems. \textit{Commun. Math. Phys.} \textbf{183} 571--607.




\bibitem{Bonnet} \textsc{Bonnet, G.} and \textsc{Adler, R. J.} (2007). The Burgers superprocess. \textit{Stoch. Proc. Appl.}
\textbf{117} 143--164.

\bibitem{Buckdahn}  \textsc{Buckdahn, R.} and  \textsc{Ma, J.} (2007). Pathwise stochastic control problems and stochastic HJB equations. \textit{SIAM J. Control Optim.} \textbf{45} 2224--2256.

\bibitem{Burgers1} \textsc{Burgers, J. M.} (1939).  Mathematical examples illustrating relations
occuring in the theory of turbulent fluid motion. \textit{Trans. Roy. Neth. Acad. Sci. Amsterdam} \textbf{17} 1--53.

\bibitem{Burgers2} \textsc{Burgers, J. M.} (1974). \textit{The Nonlinear Diffusion Equation.} D. Reidel
Dordrecht.


\bibitem{Chapouly} \textsc{Chapouly, M.} (2009). Global controllability of nonviscous and viscous Burgers-type equations.
 \textit{SIAM J. Control Optim.} \textbf{48}  1567--1599.

 \bibitem{Choi}  \textsc{Choi, H.,} \textsc{Temam, R.,} \textsc{Moin, P.} and \textsc{Kim, K.} (1993).
 Feedback control for unsteady flow and its application to the stochastic Burgers equation. \textit{J. Fluid Mech.}
 \textbf{253} 509--543.

\bibitem{Chowdhury} \textsc{Chowdhury, D.,} \textsc{Santen, L.} and \textsc{Schadschneider, A.} (2000).
Statistical physics of vehicular traffic and some related
systems. \textit{Phys. Rep.} \textbf{329} 199--329.

 \bibitem{Cole}  \textsc{Cole, J. D.} (1951). On a quasi-linear parabolic equation occurring
in aerodynamics. \textit{Quart. Appl. Math.} \textbf{9} 225--236.

\bibitem{Coron} \textsc{Coron, J. M.} (2007). \textit{Control and Nonlinearity.} Mathematical Surveys and Monographs no. 136, A.M.S.

\bibitem{Cruzeiro} \textsc{Cruzeiro, A. B.} and \textsc{Malliavin, P.}(2007). Stochastic evolution of inviscid Burgers fluid. \textit{Probability,
Geometry and Integrable Systems MSRI Publications} \textbf{55} 167--183.

\bibitem{DaPrato3} \textsc{Da Prato, G.} and \textsc{Debussche, A.} (1999).
 Control of the stochastic Burgers model for turbulence.  \textit{SIAM J. Control Optim.} \textbf{37} 1123--1149.

\bibitem{DaPrato1} \textsc{Da Prato, G.,} \textsc{Debussche, A.} and \textsc{Temam, R.}(1994).
Stochastic Burgers equation. \textit{NODEA-Nonlinear Diff.} \textbf{1} 389--402.

\bibitem{DaPrato2} \textsc{Da Prato, G.} and \textsc{Gatarek, D.} (1995). Stochastic Burgers equations with correlated noise. \textit{Stochastics
Stochastics Rep.} \textbf{52} 29--41.


\bibitem{DaPrato4} \textsc{Da Prato, G.} and \textsc{Zabcyk, J.} (1992). \textit{Stochastic Equations in Infinite Dimensions}. Cambridge.


\bibitem{Davies} \textsc{Davies, I. M.,} \textsc{Truman, A.}  and  \textsc{Zhao, H.} (2002). Stochastic heat and Burgers equations and their
singularities I - Geometrical properties. \textit{J. Math. Phys.} \textbf{43} 3293--3328.


\bibitem{E} \textsc{E, Weinan,} \textsc{Khanin, K.,}  \textsc{Mazel, A.} and \textsc{Sinai, Ya.} (1997).  Invariant measures for Burgers equation with
stochastic forcing. \textit{Ann. Math.}  \textbf{151}  877--960.


\bibitem{Englezos} \textsc{Englezos, N.} and  \textsc{Karatzas, I.} (2009).  Utility maximization with habit formation: dynamic programming and stochastic PDEs.  \textit{SIAM J. Control Optim.} \textbf{48}  481--520.

 \bibitem{Holden} \textsc{Gesztesy, F.} and \textsc{Holden, H.} (2000). The Cole-Hopf and Miura transformations revisited.  \textit{Mathematical physics and stochastic analysis}, (Lisbon, 1998), 198--214, World Sci. Publ., River Edge, NJ.

 \bibitem{Glass} \textsc{Glass, O.} and \textsc{Guerrero, S.} (2007). On the uniform controllability of the Burgers equation. \textit{SIAM J. Control Optim.} \textbf{46} 1211--1238.

 \bibitem{Goldys} \textsc{Goldys, B.} and  \textsc{Maslowski, B.} (2005). Exponential ergodicity for stochastic Burgers and 2D Navier-Stokes equations. \textit{J. Funct. Anal.} \textbf{226}  230--255.

 \bibitem{Gourcy} \textsc{Gourcy, M.} (2007).  Large deviation principle of occupation measure for stochastic Burgers equation. \textit{An. I. H. Poincare B} \textbf{43} 441--459.

 \bibitem{Gyongi}  \textsc{Gy\"ongi, I.} and  \textsc{Nualart, D.} (1999). On the stochastic Burgers equation in the real line. \textit{Ann. Probab.} \textbf{27}  782--802.

 \bibitem{Hodges1} \textsc{Hodges, S. D.}  and  \textsc{Carverhill, A. P.} (1993).  Quasi mean reversion in an efficient
stock market: The characterisation of economic equilibria which support
Black-Scholes option pricing. \textit{Econ. J.}  \textbf{103} 395--405.

\bibitem{Hodges2} \textsc{Hodges, S. D.}  and  \textsc{Selby, M. J. P.}  \textit{The risk premium in trading equilibria
which support Black-Scholes option pricing. In Mathematics of Derivative
Securities}. Publications of the Newton Institute Vol. 15, Cambridge University Press.

\bibitem{Hopf} \textsc{Hopf, E.} (1950). The partial differential equation $u_t + u\, u_x = u_{xx}.$
\textit{Commun. Pur. Appl. Math.} \textbf{3} 201--230.

 \bibitem{Horsin} \textsc{Horsin, T.} (2008). Local exact Lagrangian controllability of the Burgers viscous equation.
  \textit{Ann. I. H. Poincare-An.} \textbf{25} 219--230.

\bibitem{Karatzas-Shreve}  \textsc{Karatzas, I.} and  \textsc{Shreve, S. E.} (1991).
\textit{Brownian Motion and Stochastic Calculus}, 2nd Ed.
 Springer-Verlag, New York.

\bibitem{Kardar} \textsc{Kardar, M., } \textsc{Parisi, G.} and \textsc{Zhang, Y.-C.} (1986). Dynamical scaling of growing interfaces.
\textit{Phys. Rev. Lett.} \textbf{56} 889--892.


\bibitem{Kiefer}  \textsc{Kiefer, Yu.} (1997).  The Burgers equation with a random force and a general model for directed polymers in random environments. \textit{Probab. Theory Relat. Fields} \textbf{108} 29--65.


\bibitem{Kim}  \textsc{Kim, J. U.} (2006).  On the stochastic Burgers equation with a polynomial nonlinearity in the real line.
 \textit{Discret Contin. Dyn. - B} \textbf{6}  835--866.

\bibitem{Kunita} \textsc{Kunita, H.} (1990).
\textit{Stochastic Flows and Stochastic Differential Equations}.
Cambridge Studies in Advanced Math. vol 24, Cambridge University
Press, Cambridge.

\bibitem{Lions}  \textsc{Lions, J. L.} (1988).
\textit{Contr\^olabilit\'e Exacte, Perturbations et Stabilisation de Syst\'emes Distribu\'es.} Tomes 1 et 2, Masson.

\bibitem{Ma Protter Yong} \textsc{Ma, J.,}  \textsc{Protter, P.} and \textsc{Yong, J.} (1994).
Solving forward-backward stochastic differential equations explicitly - a four step scheme.
\textit{Probab. Theory Relat. Fields} \textbf{98} 339--359.


\bibitem{Ma-Yong97} \textsc{Ma, J.} and \textsc{Yong, J.} (1997).
Adapted solution of a degenerate backward SPDE, with applications.
\textit{Stoch. Proc. Appl.}  \textbf{70} 59--84.

\bibitem{Ma-Yong98}  \textsc{Ma, J.} and \textsc{Yong, J.} (1998).
\textit{Forward-Backward Stochastic Differential Equations and Their Applications.}
Lecture Notes in Mathematics 1702 1--270.  Springer-Verlag,
New York.

\bibitem{Ma-Yong99} \textsc{Ma, J.} and \textsc{Yong, J.} (1999).
On linear, degenerate backward stochastic partial differential
equations. \textit{Probab. Theor. Relat. Fields} \textbf{113} 135--170.


\bibitem{Manca}  \textsc{Manca, L.} (2009). The Kolmogorov Operator Associated to a Burgers SPDE in Spaces of Continuous Functions.
 \textit{Potential Anal.} in press.

\bibitem{Pardoux-Peng} \textsc{Pardoux, E.} and \textsc{Peng, S.} (1994). Backward doubly stochastic differential equations
and systems of quasilinear SPDEs.
\textit{Probab. Theory Relat. Fields} \textbf{98} 209--227.

\bibitem{Peng} \textsc{Peng, S.} (1992).  Stochastic Hamilton-Jacobi-Bellman equations.
\textit{SIAM J. Control Optim.} \textbf{30}  284--304.

\bibitem{She}  \textsc{She, Z. S.,} \textsc{Aurell, E.} and \textsc{Frisch, U.} (1992). The inviscid Burgers
equation with initial data of Brownian type. \textit{Commun. Math. Phys.} \textbf{148} 623--641.

\bibitem{Sinai}  \textsc{Sinai, Ya.} (1992).  Statistics of shocks in solutions of inviscid Burgers
equation. \textit{Commun. Math. Phys.} \textbf{148} 601--622.

\bibitem{Villarroel}  \textsc{Villarroel, J.} (2004).  The stochastic Burgers equation in It\^{o}'s sense.
 \textit{Stud. Appl. Math.} \textbf{112} 87--100.

\bibitem{Yannacopoulos}  \textsc{Yannacopoulos, A. N.} (2008).  Rational expectation models: An approach using forward-backward stochastic differential equations. \textit{J. Math. Econ.} \textbf{44}  251--276.

\bibitem{Zeldovich} \textsc{Zeldovich, Ya} (1970). Gravitational instability: an approximate
theory for large density perturbations. \textit{Astron. Astrophys.}
\textbf{5} 84--89.

\end{thebibliography}
\end{document}